\documentclass[12pt]{amsart}
\usepackage[latin1]{inputenc}
\usepackage{amscd, amsmath, mathrsfs, amssymb, amsthm, amsxtra, bbding, epsfig, eucal, eufrak, graphicx, latexsym, mathrsfs}
\usepackage{amsxtra,amssymb,amsthm,amsmath,amscd,mathrsfs, epsfig,eufrak,url}

\usepackage[all]{xy}
\paperheight=258mm
\paperwidth=252mm
\textheight=230mm  \topmargin=-8mm
\textwidth=152mm
\oddsidemargin=5mm
\evensidemargin=5mm
\rightmargin=40mm

\def \A {{\mathbb A}}

\def \C {{\mathbb C}}

\def \Q {{\mathbb Q}}
\def \R {{\mathbb R}}

\def \d {\,{\rm d}}
\def\re{{\Re e\,}}

\def\le{\leqslant}
\def\leq{\leqslant}
\def\ge{\geqslant}
\def\geq{\geqslant}

\theoremstyle{plain}
\newtheorem{theorem}{Theorem}

\newtheorem{lemma}{Lemma}[section]
%%\numberwithin{lemma}{section}
\newtheorem{corollary}{Corollary}

\theoremstyle{remark}
\newtheorem{remark}{Remark}

\theoremstyle{definition}

\numberwithin{equation}{section}

\newtheorem*{hypothesis}{Hypothesis H}

\begin{document}

\vskip 5mm

\title[Coefficients of automorphic $L$-functions for $GL_m$ of same signs]
{The number of coefficients of automorphic $L$-functions for $GL_m$ of same signs} 
\author{Jianya Liu \& Jie Wu}

\address{
Jianya Liu
\\
School of Mathematics
\\
Shandong University
\\
Jinan
\\
Shandong 250100
\\
China}
\email{jyliu@sdu.edu.cn}

\address{%
Jie Wu
\\
CNRS\\
Institut \'Elie Cartan de Lorraine\\
UMR 7502\\
54506 Van\-d\oe uvre-l\`es-Nancy\\
France}
\curraddr{%
Universit\'e de Lorraine\\
Institut \'Elie Cartan de Lorraine\\
UMR 7502\\
54506 Van\-d\oe uvre-l\`es-Nancy\\
France
}
\email{jie.wu@univ-lorraine.fr}

\date{\today}

\begin{abstract}
Let $\pi$  be an irreducible unitary cuspidal representation 
for $GL_m(\A_\Q)$, and let $L(s, \pi)$ be the automorphic $L$-function 
attached to $\pi$, which has a Dirichlet series expression in  the half-plane 
$\re s>1$. When $\pi$ is self-contragredient, all the coefficients in the 
Dirichlet series expression are real. In this paper we give non-trivial lower 
bounds for the number of positive and negative coefficients, respectively. 

\end{abstract}
\subjclass[2000]{11F30, 11F66}
\keywords{Fourier coefficients of automorphic forms, Langlands $L$-functions}
\maketitle

\addtocounter{footnote}{1}

\section{Introduction}

Let $m\ge 2$ be an integer and 
let $\pi=\otimes \pi_p$ be an irreducible unitary cuspidal representation of 
$GL_m(\A_\Q)$.
The corresponding global $L$-function is defined by the product of local factors
\begin{equation}\label{defLspi} 
L(s, \pi)
:= \prod_{p<\infty} L_p (s, \pi_p)
\end{equation}
for $\re s>1$, where
\begin{equation}\label{defLpspip}
L_p(s, \pi_p ) := \prod_{1\le j\le m} \bigg(1-\frac{\alpha_\pi(p,j)}{p^{s}}\bigg)^{-1}.
\end{equation}
The complete $L$-function $\Phi(s,\pi)$ is defined by
\begin{equation}\label{PHIspi}
\Phi(s,\pi)=L_\infty(s,\pi_\infty)L(s,\pi),
\end{equation}
where
\begin{equation}\label{L8spi8}
L_\infty(s, \pi_\infty) := \pi^{-ms/2}
\prod_{1\le j\le m} \Gamma\bigg(\frac{s-\mu_{\pi}(j)}{2}\bigg)
\end{equation}
is the Archimedean local factor. 
Here $\{\alpha_\pi(p,j)\}_{j=1}^m \subset \C $ and $\{\mu_{\pi} (j)\}_{j=1}^m\subset \C$
are local parameters associated with $\pi_p$ and $\pi_\infty$,
respectively, according to the Langlands correspondence.
Good bounds for these local parameters are of fundamental importance 
for the study of automorphic $L$-functions.
The best known record is due to Kim \& Sarnak \cite{KimSarnak2003} ($2\le m\le 4$) 
and Luo, Rudnick \& Sarnak \cite{LuoRudnickSarnak1999} ($m\ge 5$) :
\begin{equation}\label{LRS}
|\alpha_\pi(p, j)|\leq p^{\theta_m} 
\qquad\text{and}\qquad
|\re \mu_\pi(j)| \leq \theta_m 
\end{equation}
for all primes $p$ and $1\le j\le m$, where 
\begin{equation}\label{defthetam}
\theta_2 := \frac{7}{64},
\qquad
\theta_3 := \frac{5}{14},
\qquad
\theta_4 := \frac{9}{22},
\qquad
\theta_m := \frac{1}{2}-\frac{2}{m^2+1} 
\quad
(m\ge 5).
\end{equation}
The Generalized Ramanujan Conjecture (GRC in brief) asserts that
the inequalities in \eqref{LRS} hold for all primes $p$ and $1\le j\le m$ with 
\begin{equation}\label{GRC}
\theta_m=0.
\end{equation}

Jacquet \& Shalika \cite{JacquetShalika1981} showed that
the Euler product for $L(s,\pi)$ in \eqref{defLspi} converges absolutely for $\re s>1$.
Thus, in this half-plane, we may write
\begin{equation}\label{Lspi=Dir}
L(s,\pi)=\sum_{n=1}^\infty \frac{\lambda_{\pi}(n)}{n^s},
\end{equation}
where
\begin{equation}\label{lambdapin}
\lambda_{\pi}(n)
:= \prod_{p^\nu\| n} \sum_{\nu_1+\cdots+\nu_m = \nu} 
\prod_{1\le j\le m} \alpha_{\pi}(p, j)^{\nu_j}.
\end{equation}
In particular if $m=2$ and 
$\pi$ is corresponding to cusp form, then $\lambda_{\pi}(n)$ appears as a Fourier coefficient in the Fourier expansion of $\pi$.
These coefficients are mysterious objects and an interesting question is how, 
for a fixed representation,
the coefficients $\lambda_{\pi}(n)$ are distributed.
In the case of $m=2$,
there are many results from which the distribution appears to be highly random; 
for example, the recent proof of the Sate-Tate conjecture on the Fourier coefficients of holomorphic Hecke eigenforms of Barnet-Lamb, Geraghty, Harris \& Taylor \cite{BGHT2011}.
We refer to \cite{LauLiuWu2012} for a more complete survey.

It seems natural and interesting to investigate the case of $m\ge 3$.
By a classical method of Landau \cite{Landau1915},
Qu \cite[Theorem 1.1]{Qu2010} proved that 
if $\pi$ is an irreducible unitary cuspidal representation for $GL_m(\A_\Q)$ 
such that $\lambda_{\pi}(n)$ is real for all $n\ge 1$ (for example, if $\pi$ is self-contragredient), 
then there must be infinitely many sign changes in the sequence $\{\lambda_{\pi}(n)\}_{n=1}^{\infty}$, 
i.e., there are infinitely many $n$ such that $\lambda_{\pi}(n)> 0$, 
and there are infinitely many $n$ such that $\lambda_{\pi}(n)< 0$.
In this paper, 
we would like to give a quantitative version of this result.
The key point of our method is to establish an asymptotic formula for the weighted second moment of $\lambda_{\pi}(n)$ 
defined as 
\begin{equation}\label{moments}
S_{\pi, \kappa}(x) := \sum_{n\le x} \frac{|\lambda_{\pi}(n)|^2}{d_{\kappa}(n)},
\end{equation}
where $\kappa>0$ is a constant, $\zeta(s)$ is the Riemann zeta-function and $d_{\kappa}(n)$ is the Piltz function defined by the relation
\begin{equation}\label{defdkappan}
\zeta(s)^{\kappa} = \sum_{n\ge 1} d_{\kappa}(n) n^{-s}
\quad
(\re s>1).
\end{equation}
Our result (see Theorem \ref{thm4} below) is unconditional for $2\le m\le 4$.
When $m\ge 5$, we only need a weak assumption, i.e.  
the well known Hypothesis H of Rudnick \& Sarnak \cite{RudnickSarnak1996}, instead of GRC \eqref{GRC}.
We shall see that the former is a trivial consequence of the later.
In order to state Hypothesis H, 
let us first fix some notation.
Since the Euler products \eqref{defLspi}-\eqref{defLpspip} converge absolutley 
for $\re s>1$, we can write, in this half plane,
$$
- \frac{L'}{L}(s, \pi) = \sum_{n=1}^{\infty} \frac{\Lambda(n) a_{\pi}(n)}{n^s},
$$
where $\Lambda(n)$ is the von Mangoldt function defined by 
\begin{equation}\label{Lambdan}
\Lambda(n) := \begin{cases}
\log p & \text{if $n=p^\nu$}
\\\noalign{\vskip 1mm}
0        & \text{otherwise}
\end{cases}
\end{equation}
and
\begin{equation}\label{apin}
a_{\pi}(n) := \begin{cases}
\alpha_{\pi}(p, 1)^{\nu} + \cdots + \alpha_{\pi}(p, m)^{\nu} & \text{if $n=p^\nu$}
\\\noalign{\vskip 1mm}
0 & \text{otherwise}
\end{cases}
\end{equation}
for all primes $p$ and integers $\nu\ge 1$. 
Hypothesis H of Rudnick-Sarnak states the following. 

\begin{hypothesis}
{\it For any fixed $\nu\geq 2$, 
\begin{equation}\label{H}
\sum_p \frac{|a_\pi(p^\nu)|^2(\log p)^2}{p^\nu}
<\infty.
\end{equation}} 
\end{hypothesis}

\begin{remark}\label{remark1}
(i) 
Clearly Hypothesis H is a simple consequence of GRC.
\par
(ii)
For $m=2, 3$, Hypothesis H follows from the Rankin-Selberg theory \cite{RudnickSarnak1996}. 
The $GL_4(\A_\Q)$ case was proved by Kim \cite{Kim2006} based on his proof 
of the (weak) functoriality of the exterior square $\wedge^2 \pi$ 
from a cuspidal representation $\pi$ of $GL_4(\A_\Q)$ (cf. \cite{Kim2003}). 
Beyond $GL_4(\A_\Q)$, 
the only known special cases for Hypothesis H 
are as follows :
\begin{itemize}
\item{the symmetric fourth power ${\rm sym}^4\pi$
of a cuspidal representation $\pi$ of $GL_2(\A_\Q)$, 
which is an automorphic representation of $GL_5(\A_\Q)$
(see \cite{Kim2003, Kim2006});}

\item{the automorphic representation $\Pi$ of $GL_6(\A_\Q)$ 
such that $\Pi_p\cong\wedge^2\pi_p$ if $p\not\in T$, 
where $\pi$ is a cuspidal representation of $GL_4(\A_\Q)$
and $T$ is the set of places consisting of $p=2,3$ and those $p$ at which $\pi_p$ is supercuspidal
\cite[Theorem~1]{WuYe2007};}

\item{the automorphic representation $\pi_1\boxtimes\pi_2$ of $GL_6(\A_\Q)$,
where $\pi_1$ $($resp. $\pi_2)$ 
is a cuspidal representation of $GL_2(\A_\Q)$ $($resp. $GL_3(\A_\Q))$
(cf. \cite[Theorem 2]{WuYe2007}).}
\end{itemize}
\par
(iii)
From the proof of Theorem \ref{thm1}, we can see that the result of this theorem in the case of $m\ge 5$ 
also holds under a slightly weaker assumption that
for any fixed integer $\nu\geq 2$, 
\begin{equation}\label{FH}
\sum_p \frac{|a_\pi(p^\nu)|^2\log p}{p^\nu}
<\infty.
\end{equation}
\end{remark}

\smallskip

Now we write 
\begin{equation}\label{defNpix}
{\mathscr N}_{\pi}^{\pm}(x)
:= \sum_{\substack{n\le x\\ \lambda_{\pi}(n)\gtrless\,0}} 1,
\end{equation}
and we are interested in their asymptotic behaviour as $x\to\infty$. 
It seems reasonable to conjecture 
\begin{equation}\label{ConjectureNpix}
{\mathscr N}_{\pi}^{\pm}(x)
\gg_{\pi} x
\end{equation}
for $x\ge x_0(\pi)$, where $x_0(\pi)$ is a constant depending on $\pi$.

\vskip 2mm

The principal aim of this paper is to prove the following result.

\begin{theorem}\label{thm1}
Let $\pi$ be a self-contragredient irreducible unitary cuspidal 
representation for $GL_m(\A_\Q)$, and let $\theta_m$ be as in 
(\ref{defthetam}). 
Then we have 
$$
{\mathscr N}_{\pi}^{\pm}(x)
\gg_{\pi} x^{1-2\theta_m} (\log x)^{2/m-2}
\quad
(x\ge x_0(\pi))$$
unconditionally for $2\le m\le 4$ and under Hypothesis H for 
$m\ge 5$, 
where the constant $x_0(\pi)$
and the implied constant depend only on $\pi$.
\end{theorem}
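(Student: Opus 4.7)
The plan is to combine an asymptotic formula for the weighted second moment $S_{\pi,m}(x)$, the content of the forthcoming Theorem~\ref{thm4}, with a Cauchy--Schwarz inequality. Since $\pi$ is self-contragredient, $\lambda_\pi(n)\in\R$. Set
$$A(x):=\sum_{n\le x}\lambda_\pi(n),\qquad B(x):=\sum_{n\le x}|\lambda_\pi(n)|,\qquad R(x):=\sum_{n\le x}\lambda_\pi(n)^2.$$
The identity $2\sum_{n\le x,\,\lambda_\pi(n)\gtrless 0}\lambda_\pi(n)=\pm(B(x)\pm A(x))$, combined with Cauchy--Schwarz applied to the set $\{n\le x:\lambda_\pi(n)\gtrless 0\}$, gives
$$\mathscr{N}_\pi^\pm(x)\;\ge\;\frac{(B(x)\pm A(x))^2}{4\,R(x)}.$$
From the simple pole of the Rankin--Selberg $L$-function $L(s,\pi\times\pi)$ at $s=1$ one obtains $R(x)\sim c_\pi x$ with $c_\pi>0$.

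The new ingredient, to be proved as Theorem~\ref{thm4} (with $\kappa=m$), is the asymptotic
$$S_{\pi,m}(x)\;\sim\;C_\pi\,x(\log x)^{1/m-1}.$$
By multiplicativity, $\sum_n|\lambda_\pi(n)|^2 d_m(n)^{-1}n^{-s}$ has Euler factor at $p$ equal to $1+\lambda_\pi(p)^2/m\cdot p^{-s}+O(p^{-2s})$, and since the pole of $L(s,\pi\times\pi)$ forces $\sum_p\lambda_\pi(p)^2/p^s=\log(1/(s-1))+O(1)$ as $s\to 1^+$, the series factors as $\zeta(s)^{1/m}G(s)$ with $G$ holomorphic and bounded in a region strictly containing $\{\re s\ge 1\}$. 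Hypothesis~H is invoked precisely to ensure that the contributions of $p^\nu$, $\nu\ge 2$, in $G$ form an absolutely convergent Dirichlet series in the requisite strip (see Remark~\ref{remark1} for the known cases). The Selberg--Delange method then delivers the stated asymptotic.

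Granted Theorem~\ref{thm4}, the pointwise bound $|\lambda_\pi(n)|\le d_m(n)\,n^{\theta_m}$, immediate from \eqref{LRS} and \eqref{lambdapin}, yields
$$S_{\pi,m}(x)\;=\;\sum_{n\le x}\frac{|\lambda_\pi(n)|^2}{d_m(n)}\;\le\;\sum_{n\le x}|\lambda_\pi(n)|\,n^{\theta_m}\;\le\;x^{\theta_m}B(x),$$
so $B(x)\gg_\pi x^{1-\theta_m}(\log x)^{1/m-1}$. For the residual $A(x)$, a standard Perron-plus-convexity argument applied to the entire function $L(s,\pi)$ produces $|A(x)|\ll_\pi x^{1-\eta_m}$ for an $\eta_m>\theta_m$, ensuring $B(x)\pm A(x)\gg_\pi B(x)$. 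Substitution into the Cauchy--Schwarz bound gives
$$\mathscr{N}_\pi^\pm(x)\;\gg_\pi\;\frac{B(x)^2}{R(x)}\;\gg_\pi\;x^{1-2\theta_m}(\log x)^{2/m-2}.$$

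The principal obstacle is Theorem~\ref{thm4}: one must execute the Selberg--Delange method with the fractional exponent $1/m$, which requires the holomorphic continuation and polynomial growth of $G(s)$ in a strip just past $\re s=1$. This is the only place Hypothesis~H enters, and it is known unconditionally for $2\le m\le 4$. A secondary issue, relevant when the available convexity bound on $A(x)$ is marginal compared to the lower bound on $B(x)$, is circumvented by passing to the $1/\sqrt{d_m(n)}$-weighted analogues $T_m(x):=\sum_{n\le x}|\lambda_\pi(n)|/\sqrt{d_m(n)}$ and $U_m(x):=\sum_{n\le x}\lambda_\pi(n)/\sqrt{d_m(n)}$, and applying Cauchy--Schwarz against $S_{\pi,m}(x)$ directly; the same final exponents result.
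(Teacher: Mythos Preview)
Your overall strategy---lower-bound the positive/negative parts of a first moment and apply Cauchy--Schwarz against the second moment---is exactly the paper's. The gap is the assertion that ``a standard Perron-plus-convexity argument \dots\ produces $|A(x)|\ll_\pi x^{1-\eta_m}$ for an $\eta_m>\theta_m$''. This is false for $m\ge 4$. With Harcos' bound $L(\sigma+\text{i}t,\pi)\ll(|t|+1)^{m(1-\sigma)/2+\varepsilon}$, truncated Perron at height $T$ and a shift to $\re s=\varepsilon$ give at best $A(x)\ll x^{m/(m+2)+\varepsilon}$, i.e.\ $\eta_m=2/(m+2)$. But $\theta_4=9/22>1/3$, and in general $\theta_m\to\tfrac12$ while $2/(m+2)\to 0$; so for every $m\ge 4$ one has $\eta_m<\theta_m$, and $A(x)$ is not dominated by your lower bound $B(x)\gg x^{1-\theta_m}(\log x)^{1/m-1}$. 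Your fallback with the weight $1/\sqrt{d_m(n)}$ does not rescue this: you would now have to bound $U_m(x)=\sum_{n\le x}\lambda_\pi(n)/\sqrt{d_m(n)}$, and since $\lambda_\pi(n)/\sqrt{d_m(n)}$ is not the Dirichlet coefficient of any $L$-function with a functional equation, no contour-shift argument is available for it at all.

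The paper's remedy is to insert the smoothing weight $(\log(x/n))^{[m/2]+1}$ throughout. On the signed side this replaces the Perron kernel $x^s/s$ by $x^s/s^{[m/2]+2}$; since $[m/2]+2-\tfrac{m}{2}>1$, the integral along $\re s=\varepsilon$ converges absolutely against the convexity bound, yielding $\sum_{n\le x}\lambda_\pi(n)(\log(x/n))^{[m/2]+1}\ll_{\pi,\varepsilon}x^\varepsilon$ uniformly in $m$. The same weight is carried through the absolute first moment (restrict to $n\le x/2$, losing only a constant) and through the second moment (partial summation from \eqref{2ndMoment} gives $\sum_{n\le x}\lambda_\pi(n)^2(\log(x/n))^A\ll_\pi x$). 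Cauchy--Schwarz then closes exactly as you wrote. Incidentally, the paper obtains Theorem~\ref{thm4} not via Selberg--Delange but via an elementary Wirsing-type mean-value theorem (Theorem~\ref{thm2}) fed by the prime number theorem for $|a_\pi(p)|^2$; your Selberg--Delange sketch would also need a zero-free region for $L(s,\pi\times\pi)$ to make $G(s)$ extend past $\re s=1$, which you have not addressed.
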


\begin{corollary}\label{cor1}
Let ${\rm sym}^4\pi$, $\Pi$ and $\pi_1\boxtimes\pi_2$ be the automorphic 
representations of $GL_5(\A_\Q)$ and $GL_6(\A_\Q)$ mentionned in 
Remark \ref{remark1} {\rm (ii)}, respectively.
Then we have 
$$
{\mathscr N}_{{\rm sym}^4\pi}^{\pm}(x)
\gg x^{2/13} (\log x)^{-8/5}
$$
and
$$
{\mathscr N}_{\Pi}^{\pm}(x), \, 
{\mathscr N}_{\pi_1\boxtimes\pi_2}^{\pm}(x)
\gg x^{4/37} (\log x)^{-5/3}
$$
unconditionally for $x\ge x_0$, where the constant $x_0$ and the implied constants depend on ${\rm sym}^4\pi$, $\Pi$ 
and $\pi_1\boxtimes\pi_2$ respectively.
\end{corollary}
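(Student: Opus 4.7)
The plan is a direct specialization of Theorem~\ref{thm1} to each of the three automorphic representations in Remark~\ref{remark1}(ii). The whole point of the corollary is that for ${\rm sym}^4\pi$, $\Pi$, and $\pi_1\boxtimes\pi_2$, Hypothesis~H is \emph{known unconditionally} (by Kim \cite{Kim2003,Kim2006} and Wu--Ye \cite{WuYe2007} respectively), so the conditional conclusion of Theorem~\ref{thm1} for $m\ge 5$ becomes unconditional in these three cases.

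First I would verify that the hypotheses of Theorem~\ref{thm1} really are met. Each of the three representations is self-contragredient, inherited from the self-contragredience of the underlying $GL_2$, $GL_3$, $GL_4$ inputs (symmetric powers, exterior squares, and Rankin--Selberg products all commute with contragredient), and in particular its Dirichlet coefficients are real. Hypothesis~H is the content of the three bulleted items in Remark~\ref{remark1}(ii). Although $\Pi$ and $\pi_1\boxtimes\pi_2$ need not be cuspidal, the analytic inputs used in Theorem~\ref{thm1} — convergence of the Dirichlet series \eqref{Lspi=Dir}, the Rankin--Selberg convolution machinery, and the local bounds \eqref{LRS} — are available in the same form for isobaric automorphic representations, so Theorem~\ref{thm1} continues to apply.

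Second I would substitute $m=5$ and $m=6$ into \eqref{defthetam} and then into the exponents $1-2\theta_m$ and $2/m-2$ of Theorem~\ref{thm1}. A quick calculation gives
\[
\theta_5=\tfrac{1}{2}-\tfrac{2}{26}=\tfrac{11}{26},\qquad 1-2\theta_5=\tfrac{2}{13},\qquad \tfrac{2}{5}-2=-\tfrac{8}{5},
\]
\[
\theta_6=\tfrac{1}{2}-\tfrac{2}{37}=\tfrac{33}{74},\qquad 1-2\theta_6=\tfrac{4}{37},\qquad \tfrac{2}{6}-2=-\tfrac{5}{3},
\]
which are exactly the exponents claimed. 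Theorem~\ref{thm1} then yields ${\mathscr N}^{\pm}_{{\rm sym}^4\pi}(x)\gg x^{2/13}(\log x)^{-8/5}$ and, for both $\Pi$ and $\pi_1\boxtimes\pi_2$, ${\mathscr N}^{\pm}_\bullet(x)\gg x^{4/37}(\log x)^{-5/3}$, for $x\ge x_0$.

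The main (and essentially only) obstacle is the bookkeeping in the previous step: one must confirm that Theorem~\ref{thm1} is legitimately applicable in the non-cuspidal isobaric situations $\Pi$ and $\pi_1\boxtimes\pi_2$, i.e.\ that cuspidality is not used in any essential way in the proof beyond the analytic properties — self-contragredience, real coefficients, the Jacquet--Shalika absolute convergence, and the $L$-function factorization giving the required meromorphic continuation and functional equation — which all carry over to isobaric sums. Once this verification is in place, the corollary follows immediately from Theorem~\ref{thm1} with no further work beyond the arithmetic above.
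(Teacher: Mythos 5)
Your proposal is correct and follows exactly the route the paper intends: Corollary \ref{cor1} is obtained by specializing Theorem \ref{thm1} to $m=5$ and $m=6$, using the fact that Hypothesis H holds unconditionally for ${\rm sym}^4\pi$, $\Pi$ and $\pi_1\boxtimes\pi_2$ (Remark \ref{remark1}(ii)), and your exponent computations $1-2\theta_5=\tfrac{2}{13}$, $\tfrac{2}{5}-2=-\tfrac{8}{5}$, $1-2\theta_6=\tfrac{4}{37}$, $\tfrac{2}{6}-2=-\tfrac{5}{3}$ are exactly right. Your aside about possible non-cuspidality of $\Pi$ and $\pi_1\boxtimes\pi_2$ is a reasonable caution, but the paper treats these cases the same way you do, so this does not constitute a departure from its argument.
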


\begin{corollary}\label{cor2}
Let $\pi$ be a self-contragredient irreducible unitary cuspidal 
representation for $GL_m(\A_\Q)$. 
Then the number of sign changes of the sequence $\{\lambda_{\pi}(n)\}_{n\ge 1}$ in the interval $[1, x]$ is $\gg_{\pi} \log\log x$
unconditionally for $2\le m\le 4$ and under Hypothesis H for $m\ge 5$.
\end{corollary}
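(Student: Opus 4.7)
The plan is to deduce Corollary \ref{cor2} from Theorem \ref{thm1} by a geometric-growth dyadic argument. I would fix $\alpha > 1/(1-2\theta_m)$ and a sufficiently large $y_0 = y_0(\pi)$, and define $y_j := y_0^{\alpha^j}$ for $j = 0, 1, 2, \ldots$. Applying Theorem \ref{thm1} at $y_{j+1}$ yields
$$
\mathscr{N}_\pi^\pm(y_{j+1}) \gg_\pi y_{j+1}^{1-2\theta_m}(\log y_{j+1})^{2/m-2},
$$
whereas the trivial bound gives $\mathscr{N}_\pi^\pm(y_j) \le y_j$. Since $y_{j+1}^{1-2\theta_m} = y_j^{\alpha(1-2\theta_m)}$ with $\alpha(1-2\theta_m) > 1$, a positive power of $y_j$ absorbs both the implied constant in the $\gg_\pi$ and the negative-power logarithmic factor $(\log y_{j+1})^{2/m-2} = (\alpha^{j+1}\log y_0)^{2/m-2}$. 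Therefore, for $y_0$ chosen large enough in terms of $\pi$,
$$
\mathscr{N}_\pi^\pm(y_{j+1}) - \mathscr{N}_\pi^\pm(y_j) > 0 \qquad (j \ge 0).
$$

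The next step is to translate this into sign changes: each interval $(y_j, y_{j+1}]$ must contain both some $n$ with $\lambda_\pi(n) > 0$ and some $n$ with $\lambda_\pi(n) < 0$, so it harbors at least one sign change of the sequence $\{\lambda_\pi(n)\}_{n \ge 1}$, and sign changes lying inside disjoint intervals are automatically distinct. Finally, I would count the admissible intervals up to $x$: the largest $J$ with $y_J \le x$ satisfies $\alpha^J \log y_0 \le \log x$, i.e.\ $J \ge (\log\log x)/\log\alpha + O_\pi(1)$. This delivers at least $\gg_\pi \log\log x$ distinct sign changes in $[1, x]$.

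The argument is essentially mechanical once Theorem \ref{thm1} is in hand; no serious obstacle arises. The only point that demands brief verification is that the negative-exponent polylogarithmic factor $(\log y_{j+1})^{2/m-2}$ does not spoil the dominance of $y_{j+1}^{1-2\theta_m}$ over $y_j$, which is painless because $\log y_{j+1} = \alpha^{j+1} \log y_0$ is only a logarithm of $y_j$ and is therefore swallowed by any positive power of $y_j$. One checks in passing that $1-2\theta_m = 4/(m^2+1) > 0$ for all $m \ge 2$ (and agrees with this formula in the exceptional ranges $2 \le m \le 4$ after substituting the values $\theta_2 = 7/64$, $\theta_3 = 5/14$, $\theta_4 = 9/22$), so the choice of $\alpha$ is always legitimate.
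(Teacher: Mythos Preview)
Your argument is correct and is the natural deduction of Corollary~\ref{cor2} from Theorem~\ref{thm1}. The paper does not supply a separate proof of Corollary~\ref{cor2}; it is simply stated as a consequence of Theorem~\ref{thm1}, and your tower construction $y_j=y_0^{\alpha^j}$ with $\alpha>1/(1-2\theta_m)$ is exactly the expected way to fill in the details. The key inequality $\mathscr{N}_\pi^\pm(y_{j+1})>\mathscr{N}_\pi^\pm(y_j)$ for all $j\ge 0$ (once $y_0$ is large enough) is justified as you describe, since the right-hand side of the comparison grows only like a power of $\alpha^j$ while the left-hand side grows like $y_0^{c\alpha^j}$.

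One small correction to your closing parenthetical: the identity $1-2\theta_m=4/(m^2+1)$ holds only for $m\ge 5$; for $m=2,3,4$ one has $1-2\theta_m=25/32,\ 2/7,\ 2/11$ respectively, none of which match $4/(m^2+1)$. This is harmless for the proof, since all you actually use is $1-2\theta_m>0$, which is immediate from \eqref{defthetam} in every case.
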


When $\pi$ is an holomorphic primitive modular form $f$ of $GL_2$, 
Lau \& Wu \cite[Theorem 1]{LauWu2009} obtained the best possible lower bounds \eqref{ConjectureNpix}.
Their method is completely different from ours and it seems rather difficult to generalize it to our case.
One of difficulties is that in the general case there is no analogue of Serre's estimate \cite[page 181]{Serre1981} :
$$
|\{p\le x : \lambda_f(p)=0\}|
\ll_{f, \delta}\frac{x}{(\log x)^{1+\delta}}
$$
for $x\ge 2$ and any $\delta<\frac{1}{2}$.
Here, as indicated before, 
we shall prove Theorem \ref{thm1} by establishing an asymptotic formula of the weighted second moment \eqref{moments}.
In fact we shall obtain a rather general mean value theorem on non-negative multiplicative functions
(see Theorem \ref{thm2} in Section 2 below), and 
\eqref{moments} is just a particular case of this. 
This general result is of independent interest and may find other 
applications in the future. 

\vskip 3mm
 
\noindent 
{\bf Acknowledgements.} Liu is supported in part by the 973 program 
and NSFC grant 11031004, and both authors are supported in part 
by IRT1264 from the Ministry of Education.  
This work was done partly during the visit of the first author 
to l'Institut Elie Cartan de l'Universit\'e de Lorraine.
He wishes to thank this institute for the hospitality and support.

\vskip 8mm

\section{Mean values of multiplicative functions}

Let $f(n)$ be a non-negative multiplicative function satisfying certain growth conditions
that will be specified later. 
In order to prove Theorem \ref{thm1}, we need to establish an asymptotic formula for
\begin{equation}\label{defSfx}
S_f(x) := \sum_{n\le x} f(n).
\end{equation}
Assume that there are positive constants $A>0$, $\kappa>0$ and $\eta\in (0, \tfrac{1}{2})$ such that
\begin{eqnarray}\label{TWcondition1}
\sum_{p\le z} f(p)\log p
= \kappa z + O(z/R(z))
\quad
(z\ge 2)
\end{eqnarray}
where the summation is taken over primes $p$, and $z\mapsto R(z)$ is an increasing function satisfying certain growth conditions, and such that 
\begin{eqnarray}\label{TWcondition2}
\sum_{p, \, \nu\ge 2} \frac{f(p^\nu)}{p^{ (1-\eta)\nu}}
\le A. 
\end{eqnarray}
By the saddle point meothd, 
Tenenbaum \& Wu \cite{TenenbaumWu2003} obtained a sharp asymptotic formula 
for the mean value of $f(n)$ over friable integers :
$$
S_f(x, y) := \sum_{\substack{n\le x\\ p\mid n\Rightarrow p\le y}} f(n)
$$
in a large domain of $(x, y)$.
Taking $y=x$, their result yields an asymptotic formula for $S_f(x)$.
Unfortunately the condition \eqref{TWcondition2} is too restrictive 
for our application.
In this section, we shall establish a rather general asymptotic formula for $S_f(x)$.
On the one hand, we shall relax \eqref{TWcondition2} to \eqref{Condition2} below,
which is necessary for our application.  
On the other hand, we shall consider a general form $R(z)$ for the error term of \eqref{TWcondition1}
as in \cite{TenenbaumWu2003} such that it is more convenient for 
applications in the future.

Denote by ${\mathscr R}$ the set of all increasing functions $R\in {\mathscr C}^1((1, \infty), (1, \infty))$ satisfying the following conditions : there are two positive constants $z_0=z_0(R)$ and $\delta=\delta(R)$ such that
\par
{\rm (a)}
$z\mapsto \frac{R'(z)}{R(z)}z\log z$ is monotonic 
and $\frac{R'(z)}{R(z)}z$ is bounded for $z\ge z_0$;
\par
{\rm (b)}
we have $R(z)\gg (\log_2z)^{1+\delta}$ for $z\ge z_0$.  

\noindent 
For simplicity in the following we will write 
\begin{eqnarray}\label{def/R+}
R^+ (z) : =\frac{R'(z)}{R(z)}z\log z. 
\end{eqnarray}

\vskip 1mm

The definition of ${\mathscr R}$ may seem technical,  
but the conditions are easily verified in practice. 
In fact, most of the explicit error terms of arithmetic sums actually correspond to elements of such a class.
The following functions are, defined for $z>\text{e}$, typical 
examples of elements of ${\mathscr R}$ :
$$
(\log_2 z)^{1+\delta},
\qquad
(\log z)^{\delta}/(\log_2z)^\eta,
\qquad
\text{e}^{(\log_2z)^\delta},
\qquad
\text{e}^{(\log z)^\delta},
\qquad
z^\delta,
$$
where $\delta>0$ is a positive constant, $\eta\in \R$ is a real number
and $\log_k$ is the $k$-fold logarithmic function.

\vskip 1mm

The principal result of this section is as follows.

\begin{theorem}\label{thm2}
Let $f(n)$ be a non-negative multiplicative function satisfying the condition \eqref{TWcondition1} with some $R\in {\mathscr R}$
and the inequality
\begin{equation}\label{Condition2}
\sum_{p, \, \nu\ge 2} \frac{f(p^\nu)}{p^\nu} \log p^\nu\le A,
\end{equation}
where $A>0$ is a constant.
Then we have
\begin{equation}\label{AsymptoticSfx}
S_f(x) = C_fx(\log x)^{\kappa-1} \bigg\{1
+ O_f\bigg(\frac{(\log_2x)^2}{\log x} + {\mathcal E}(x)\bigg)\bigg\},
\end{equation}
where
\begin{equation}\label{defCf}
C_f := \frac{1}{\Gamma(\kappa)} 
\prod_p\bigg(1+\sum_{\nu\ge 1} \frac{f(p^\nu)}{p^\nu}\bigg) \bigg(1- \frac{1}{p}\bigg)^\kappa
\end{equation}
and
\begin{equation}\label{defMathcalEx}
{\mathcal E}(x) := 
\begin{cases}
\displaystyle \frac{(\log R(x))^2}{R(x)} + \int_x^\infty \!\! \frac{(\log R(z))^2}{R(z)z\log z} \d z
& \mbox{if } R^+ (z)\uparrow 1, 
\\\noalign{\vskip 2mm}
\displaystyle \frac{1}{R(x)} + \int_x^\infty \frac{1}{R(z)z\log z} \d z
& \text{otherwise.}   
\end{cases}
\end{equation}
Here and throughout, $g(z)\uparrow 1$ means that the function $g(z)$ monotonically 
increases to the limit $1$ as $z\to \infty$.  
\end{theorem}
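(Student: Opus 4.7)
My plan is to implement a Selberg-Delange style analysis through elementary convolution identities, which keeps the whole argument on the line $\re s=1$ and thus copes with the weak hypothesis \eqref{Condition2}. Formally the Dirichlet series
$$F(s) := \sum_{n\ge 1} \frac{f(n)}{n^s}$$
factors as $F(s)=\zeta(s)^{\kappa}H(s)$, where
$$H(s)=\prod_{p}\Bigl(1+\sum_{\nu\ge 1}\frac{f(p^\nu)}{p^{\nu s}}\Bigr)\Bigl(1-p^{-s}\Bigr)^{\kappa},$$
so that $H(1)=\Gamma(\kappa)C_f$. Calling $h$ the multiplicative function with $\sum_n h(n)n^{-s}=H(s)$, one obtains the convolution identity $f=d_\kappa*h$ and hence
$$S_f(x)=\sum_{d\le x}h(d)\,D_\kappa(x/d),\qquad D_\kappa(y):=\sum_{n\le y}d_\kappa(n).$$

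The core of the proof is then the control of partial sums of $h(d)/d$. On primes $h(p)=f(p)-\kappa$; combining \eqref{TWcondition1} with the prime number theorem yields $\sum_{p\le z}(f(p)-\kappa)\log p=O(z/R(z))$, and a first Abel summation gives $\sum_{p\le z}h(p)/p=\beta+O(1/R(z))$ for some constant $\beta$. A second Abel summation, whose outcome depends on whether the function $R^+$ introduced in \eqref{def/R+} tends monotonically to $1$, produces the two cases of \eqref{defMathcalEx}: in the critical regime $R^+\uparrow 1$ the integration becomes borderline and inflicts the extra factor $(\log R)^2$. On prime powers, the values $h(p^\nu)$ are explicit linear combinations of $f(p^j)$ with $j\le\nu$, so hypothesis \eqref{Condition2} immediately bounds the double series $\sum_{p,\,\nu\ge 2}|h(p^\nu)|\log p^\nu/p^\nu$. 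Together these estimates yield
$$\sum_{d\le z}\frac{h(d)}{d}=H(1)+O({\mathcal E}(z)),$$
together with the companion bound $\sum_{d\le z}|h(d)|/d\ll 1$ that will be needed when interchanging summations.

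The final assembly feeds the classical Selberg-Delange asymptotic $D_\kappa(y)=y(\log y)^{\kappa-1}\{1+O(1/\log y)\}/\Gamma(\kappa)$ into the convolution above, expands $(\log(x/d))^{\kappa-1}=(\log x)^{\kappa-1}(1-\log d/\log x)^{\kappa-1}$ in a short binomial polynomial truncated after $O(\log_2 x)$ terms, and swaps the order of summation. The leading contribution collapses to $C_f\,x(\log x)^{\kappa-1}$; the truncation residue together with the $1/\log y$ error in $D_\kappa$ accounts for the $(\log_2 x)^2/\log x$ term, while the tail of $\sum h(d)/d$ beyond $x$ contributes exactly ${\mathcal E}(x)$.

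The main obstacle is the analysis of $\sum h(d)/d$ itself under the hypotheses as stated. The values $h(p)=f(p)-\kappa$ are not pointwise small, so convergence relies purely on the averaged cancellation afforded by \eqref{TWcondition1}: no termwise majorisation is available. Moreover, the replacement of the Tenenbaum-Wu condition \eqref{TWcondition2} by the logarithmically weighted $L^1$ bound \eqref{Condition2} means that $H(s)$ is only under control on the line $\re s=1$, and not in a strip $\re s>1-\eta$; this rules out a Perron-type contour shift and forces every saving to come from carefully iterated Abel summations calibrated to the class $\mathscr{R}$, which is precisely what splits the definition of ${\mathcal E}(x)$ in \eqref{defMathcalEx} into its two regimes.
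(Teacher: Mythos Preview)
Your convolution approach via $f=d_\kappa*h$ contains a genuine gap. The ``companion bound'' $\sum_{d\le z}|h(d)|/d\ll 1$ that you invoke is \emph{false} under the stated hypotheses. Since $|h(p)|=|f(p)-\kappa|$, such a bound would require control of $\sum_{p\le z}|f(p)-\kappa|\log p$, whereas \eqref{TWcondition1} only controls the \emph{signed} sum $\sum_{p\le z}(f(p)-\kappa)\log p$. A concrete counterexample: take $\kappa=1$, set $f(p)=2$ for $p\equiv 1\pmod 4$, $f(p)=0$ for $p\equiv 3\pmod 4$, and $f(p^\nu)=0$ for all $\nu\ge 2$. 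Then \eqref{TWcondition1} holds with $R(z)=\e^{c\sqrt{\log z}}\in\mathscr R$ by the prime number theorem for arithmetic progressions and \eqref{Condition2} is trivial, yet $|h(p)|=1$ for every odd prime $p$, so $\sum_{d\le z}|h(d)|/d\gg\log z$. You yourself observe that ``no termwise majorisation is available''; that is precisely why this bound cannot hold. Without it you cannot justify swapping the order of summation after your binomial truncation, nor can you control the tail $\sum_{d}h(d)D_\kappa(x/d)$ over $d$ near $x$. And the stronger claim $\sum_{d\le z}h(d)/d=H(1)+O(\mathcal E(z))$ is an assertion about a signed multiplicative function summed over \emph{all} integers, not just primes; two Abel summations on the prime sum do not deliver this.

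The paper bypasses the signed function $h$ entirely and works only with the nonnegative $f$. It uses the elementary identity
\[
\sum_{n\le x}f(n)\log n=\sum_{\substack{mp^\nu\le x\\ p\nmid m}}f(m)f(p^\nu)\log p^\nu,
\]
isolates the $\nu=1$ contribution, and inserts \eqref{TWcondition1} directly to obtain $S_f(x)\log x=\kappa\,x\,s_f(x)+(\text{error})$, where $s_f(x)=\sum_{n\le x}f(n)/n$. The $\nu\ge 2$ terms are disposed of by \eqref{Condition2}, and the error terms are tamed using the a~priori upper bound $S_f(x)\ll x(\log x)^{\kappa-1}$ of Corollary~\ref{cor}. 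The logarithmic mean $s_f(x)$ is then evaluated by the same device (Theorem~\ref{thm3}), producing an integral--functional relation for $s_f$ that can be solved; this is where the two regimes for $R^+(z)$ enter. Because every quantity in sight is nonnegative, crude upper bounds are available at each step and no cancellation over composite integers is ever needed.
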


In order to prove Theorem \ref{thm2}, we first state a simple lemma due to 
Hall \& Tenenbaum \cite[Theorem 01]{HallTenenbaum1988},
which gives an upper bound for $S_f(x)$ in term of the logarithmic mean of $f(n)$ defined by
\begin{equation}\label{defsfx}
s_f(x) := \sum_{n\le x} \frac{f(n)}{n}\cdot
\end{equation}

\begin{lemma}\label{UpperBoundSfxsfx}
Let $f(n)$ be a non-negative multiplicative function satisfying a weaker version of \eqref{TWcondition1} 
with certain constant $B>0 :$
\begin{equation}\label{UBcondition1}
\sum_{p\le z} f(p) \log p\le Bz
\quad
(z\ge 2)
\end{equation}
and the condition \eqref{Condition2}.
Then for all $x\ge 2$ we have
\begin{equation}\label{UBSfxsfx}
S_f(x)\le (A+B+1)\frac{x}{\log x}s_f(x).
\end{equation}
\end{lemma}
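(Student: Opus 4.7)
The plan is to multiply through by $\log x$ and exploit the elementary identity
\begin{equation*}
S_f(x) \log x = T_f(x) + U_f(x),
\end{equation*}
where $T_f(x) := \sum_{n \le x} f(n) \log n$ and $U_f(x) := \sum_{n \le x} f(n) \log(x/n)$. The term $U_f(x)$ is easy: since $\log(x/n) \le x/n - 1$ for $n \le x$, one obtains immediately $U_f(x) \le x\,s_f(x) - S_f(x)$.

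The real work is the bound on $T_f(x)$. Writing $\log n = \sum_{p^a \| n} a \log p$ and using multiplicativity of $f$ (so that for $p^a \| n$ we factor $n = p^a m$ with $(m,p)=1$ and $f(n) = f(p^a) f(m)$), one arrives at
\begin{equation*}
T_f(x) = \sum_p \log p \sum_{a \ge 1} a\, f(p^a) \sum_{\substack{m \le x/p^a \\ (m, p) = 1}} f(m).
\end{equation*}
I would split this sum at $a=2$. For the tail $a \ge 2$ the crude bound $\sum_{(m,p)=1,\, m\le y} f(m) \le y\, s_f(y) \le y\, s_f(x)$ is enough: it yields a contribution bounded by $x\, s_f(x)\sum_p\sum_{a\ge 2} f(p^a)\, a(\log p)/p^a$, which is at most $A\, x\, s_f(x)$ by hypothesis \eqref{Condition2}.

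The subtle point, and the main obstacle, is the $a=1$ contribution, since $\sum_p f(p)(\log p)/p$ is typically of size $\log x$ and the same crude trick would cost an extra factor of $\log x$. The decisive move is to swap the order of summation and apply hypothesis \eqref{UBcondition1} directly to the resulting inner prime sum:
\begin{equation*}
\sum_p f(p) \log p \sum_{\substack{m \le x/p \\ (m,p)=1}} f(m)
\le \sum_{m \le x} f(m) \sum_{p \le x/m} f(p) \log p
\le \sum_{m\le x} f(m)\cdot B\, (x/m) = B\, x\, s_f(x).
\end{equation*}
Assembling the two pieces of $T_f(x)$ with the bound on $U_f(x)$ yields $S_f(x)(\log x + 1) \le (A + B + 1)\, x\, s_f(x)$, which implies the claimed inequality since $\log x + 1 \ge \log x$.
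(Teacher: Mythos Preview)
Your proof is correct and follows essentially the same route as the paper: both decompose $S_f(x)\log x$ into $T_f(x)=\sum_{n\le x}f(n)\log n$ plus a remainder, handle $T_f(x)$ by splitting the prime-power contributions at $\nu=2$ and invoking \eqref{UBcondition1} and \eqref{Condition2}, and bound the remainder by $x\,s_f(x)$. The only cosmetic difference is that the paper expresses your $U_f(x)$ as the integral $\int_1^x S_f(t)/t\,\d t$ and bounds it via $S_f(t)\le t\,s_f(t)$, whereas you use the pointwise inequality $\log(x/n)\le x/n-1$ (which even gains an inconsequential extra $S_f(x)$ on the left).
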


\begin{proof}
We write
$$
\sum_{n\le x} f(n)\log n
= \int_{1-}^x \log t \d S_f(t)
= S_f(x) \log x - \int_1^x \frac{S_f(t)}{t} \d t.
$$
We have trivially
$$
\int_1^x \frac{S_f(t)}{t} \d t
\le \int_1^x s_f(t) \d t
\le xs_f(x).
$$
With the help of \eqref{UBcondition1} and \eqref{Condition2}, we can deduce that
\begin{align*}
\sum_{n\le x} f(n)\log n
& = \sum_{\substack{p^\nu m\le x\\ p\nmid m}} f(m) f(p^\nu) \log p^\nu
\\
& \le \sum_{m\le x} f(m) \sum_{p\le x/m} f(p) \log p
+ x\sum_{m\le x} \frac{f(m)}{m} \sum_{\substack{p^{\nu}\le x/m\\ \nu\ge 2}} \frac{f(p^\nu)}{p^{\nu}} \log p^\nu
\\
& \le Bxs_f(x) + Axs_f(x).
\end{align*}

Now the required inequality \eqref{UBSfxsfx} follows from these relations.
\end{proof}

Next we shall evaluate $s_f(x)$.
This was studied by Halberstam  in an unpublished manuscript, 
who obtained an asymptotic formula for $s_f(x)$ 
by using the technique of \cite[Chapter 5]{HR1974}.  
A complete proof of Halberstam's result can be found in Song's paper \cite[Theorem A]{Song2001}.
Theorem \ref{thm3} below can be regarded as a simple generalization of Halberstam's result
(with the choice of $R(z) = (\log z)^{\delta}$ with $\delta\in (0, 1)$).

\begin{theorem}\label{thm3}
Let $R\in {\mathscr R}$ and let $f(n)$ be a non-negative multiplicative function satisfying, for certain positive constant $\kappa>0$,
\begin{equation}\label{Hcondition1}
\sum_{p\le z} \frac{f(p)}{p}\log p = \kappa \log z + O\bigg(\frac{\log z}{R(z)} + \log_2z\bigg)
\end{equation}
and \eqref{Condition2}.
Then for all $x\ge 2$ we have
\begin{equation}\label{Asymptotic.sfx}
s_f(x) = c_f(\log x)^\kappa\bigg\{1 + O_f\bigg(\frac{(\log_2x)^2}{\log x} + {\mathfrak E}(x)\bigg)\bigg\},
\end{equation}
where 
\begin{equation}\label{defcf}
c_f := \frac{1}{\Gamma(\kappa+1)} 
\prod_p\bigg(1+\sum_{\nu\ge 1} \frac{f(p^\nu)}{p^\nu}\bigg) \bigg(1- \frac{1}{p}\bigg)^\kappa
\end{equation}
and
\begin{equation}\label{defMathfrakEx}
{\mathfrak E}(x) := 
\begin{cases}
\displaystyle \frac{\log R(x)}{R(x)} + \int_x^\infty \!\! \frac{\log R(z)}{R(z)z\log z} \d z
& \text{if $R^+ (z)\uparrow 1$},
\\\noalign{\vskip 2mm}
\displaystyle \frac{1}{R(x)} + \int_x^\infty \frac{1}{R(z)z\log z} \d z
& \text{otherwise.}
\end{cases}
\end{equation}
\end{theorem}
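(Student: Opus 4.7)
The plan is to prove Theorem~\ref{thm3} via the Levin--Fainleib/Halberstam method: derive an integral equation for $s_f$ and then solve it. First I would evaluate
$$T(x) := \sum_{n\le x} \frac{f(n)\log n}{n}$$
in two ways. Partial summation gives
$$T(x) = s_f(x)\log x - \int_1^x \frac{s_f(t)}{t}\,dt.$$
On the other hand, using $\log n = \sum_{p^a\|n}\log p^a$ and factoring $n = p^a m$ with $(m,p)=1$, one obtains
$$T(x) = \sum_{p^a\le x} \frac{f(p^a)\log p^a}{p^a} \sum_{\substack{m\le x/p^a\\ (m,p)=1}} \frac{f(m)}{m}.$$
The inner sum differs from $s_f(x/p^a)$ by a quantity controlled by hypothesis~\eqref{Condition2}, and the contribution of prime powers $p^a$ with $a\ge 2$ is $O(s_f(x))$ by the same hypothesis. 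A crude preliminary bound $s_f(x)\ll(\log x)^\kappa$, obtained from the Rankin-trick upper bound $s_f(x)\le \prod_{p\le x}(1+\sum_{\nu\ge 1}f(p^\nu)/p^\nu)$ together with~\eqref{Hcondition1} and~\eqref{Condition2}, makes these reductions quantitative.

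Next, applying Abel summation to hypothesis~\eqref{Hcondition1}, written $A(t) := \sum_{p\le t} f(p)\log p/p = \kappa\log t + E(t)$ with $E(t) = O(\log t/R(t)+\log_2 t)$, produces
$$\sum_{p\le x} \frac{f(p)\log p}{p}\, s_f(x/p) = \kappa \int_1^x \frac{s_f(t)}{t}\,dt + \mathcal R(x).$$
Combining this with the two expressions for $T(x)$ gives the integral equation
$$s_f(x)\log x = (\kappa+1)\int_1^x \frac{s_f(t)}{t}\,dt + \mathcal E_1(x).$$
Its homogeneous solution is $c(\log x)^\kappa$, so setting $h(x) := s_f(x)/(\log x)^\kappa$ and differentiating the equation in the distributional sense yields
$$\log h(x) - \log h(y) \;=\; \int_y^x \frac{d\mathcal E_1(t)}{s_f(t)\log t},$$
from which convergence of $h(x)$ to a limit $c_f$, with the claimed error term, follows by integration by parts and the $R$-dependent estimate on $\mathcal E_1$. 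The constant is identified with the Euler-product expression~\eqref{defcf} by factoring the Dirichlet series $\sum_n f(n) n^{-s} = \zeta(s)^\kappa G(s)$ near $s=1$ and matching residues.

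The hard part will be tracking the error $\mathcal E_1$ precisely enough to distinguish the two cases in~\eqref{defMathfrakEx}. When $R^+(z)$ stays bounded away from $1$, the naive bound $E(t)\ll \log t/R(t)$ integrates directly to give the ``otherwise'' estimate. At the borderline $R^+(z)\uparrow 1$, however, this crude integration loses a factor of $\log R(x)$: the integral $\int_x^\infty dt/(R(t)\,t\log t)$ is then only barely convergent, and capturing the correct order of magnitude requires a second-order expansion, exploiting the monotonicity of $R^+(z)z\log z$ built into the definition of $\mathscr R$ to integrate by parts once and extract the additional $\log R$ factor. This dichotomy mirrors the one appearing in Tenenbaum--Wu's saddle-point analysis of friable mean values, which presumably motivated the authors to formulate the error term in this piecewise fashion.
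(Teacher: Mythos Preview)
Your outline is the paper's Halberstam-type argument, but two points need correction.

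First, the coprimality correction --- replacing $\sum_{m\le x/p,\,(m,p)=1}f(m)/m$ by $s_f(x/p)$ --- is \emph{not} controlled by~\eqref{Condition2} alone. Writing $m=p^\nu\ell$ with $p\nmid\ell$, the $\nu=1$ piece gives $\sum_p f(p)^2(\log p)/p^2$, which involves $f(p)$ and must be bounded via~\eqref{Hcondition1}. This is precisely where the $\log R(x)$ factor in~\eqref{defMathfrakEx} enters: from $f(p)(\log p)/p\ll \log_2 p + (\log p)/R(p)$ and~\eqref{Rcondition4} one obtains
\[
\sum_{p\le x}\frac{f(p)^2\log p}{p^2}\ll (\log_2 x)^2 + (\log x)\frac{\log R(x)}{R(x)},
\]
with the $\log R$ present only when $R^+(z)\uparrow 1$. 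By contrast, the Abel-summation remainder $\mathcal R(x)$ you single out is, via~\eqref{Rcondition2}, simply $\ll s_f(x)\big(\log_2 x + (\log x)/R(x)\big)$ in all cases --- no extra $\log R$ there. So your last paragraph misattributes the dichotomy; no ``second-order expansion'' is needed, only the elementary estimate~\eqref{Rcondition4}.

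Second, since $s_f$ is a step function, differentiating $h(x)=s_f(x)/(\log x)^\kappa$ directly is awkward. The paper instead passes to the smooth quantity $I(x)=\int_1^x s_f(t)\,t^{-1}\,dt$: the integral equation reads $s_f(x)=(1-\varepsilon(x))^{-1}(\kappa+1)I(x)/\log x$ with $|\varepsilon(x)|\le\tfrac12$, and one differentiates $\varepsilon_0(x):=\log\big((\kappa+1)I(x)/(\log x)^{\kappa+1}\big)$ to get $\varepsilon_0'(x)=\frac{\kappa+1}{x\log x}\,\frac{\varepsilon(x)}{1-\varepsilon(x)}$. Integrating from $x$ to $\infty$ yields~\eqref{Asymptotic.sfx} cleanly. (Incidentally, the paper does not carry out your Dirichlet-series identification of $c_f$; it simply defines $c_f$ as $\exp\int_1^\infty\varepsilon_0'$, so your step there would actually close a small gap.)
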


The assumption $R(z)\gg (\log_2z)^{1+\delta} \; (z\ge z_0)$ implies that the last integral is convergent. 

From Lemma \ref{UpperBoundSfxsfx} and Theorem \ref{thm3}, we easily derive an upper bound for $S_f(x)$,
which will play a key role in the proof of Theorem \ref{thm2}.

\begin{corollary}\label{cor}
If $f$ is as in Theorem \ref{thm2}, then
\begin{equation}\label{UBSfx}
S_f(x)\ll x(\log x)^{\kappa-1}
\end{equation}
for all $x\ge 2$.
\end{corollary}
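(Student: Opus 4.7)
\medskip

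\noindent\textbf{Plan of proof.} The strategy is to chain the two results stated immediately before the corollary. Lemma \ref{UpperBoundSfxsfx} furnishes an upper bound of the shape $S_f(x)\le (A+B+1)(x/\log x)\,s_f(x)$, and Theorem \ref{thm3} furnishes an asymptotic $s_f(x)\sim c_f(\log x)^\kappa$; composing them yields precisely $S_f(x)\ll x(\log x)^{\kappa-1}$. The work is therefore to verify that the hypotheses of Theorem \ref{thm2} (namely \eqref{TWcondition1} with some $R\in\mathscr R$ and \eqref{Condition2}) imply the hypotheses of both auxiliary results.

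The condition \eqref{Condition2} is common to both statements, so nothing need be checked there. For Lemma \ref{UpperBoundSfxsfx}, the weakened prime bound \eqref{UBcondition1} is immediate from \eqref{TWcondition1}: since $R(z)\ge 1$, the error term $O(z/R(z))$ is absorbed into a multiple of $z$, so $\sum_{p\le z} f(p)\log p\le Bz$ holds with a suitable constant $B>0$.

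The only non-cosmetic step is deducing \eqref{Hcondition1} from \eqref{TWcondition1}. Setting $T(z):=\sum_{p\le z} f(p)\log p=\kappa z+O(z/R(z))$, partial summation gives
$$
\sum_{p\le z}\frac{f(p)\log p}{p}
=\frac{T(z)}{z}+\int_2^z\frac{T(t)}{t^2}\,\d t
=\kappa\log z+O\!\left(\frac{1}{R(z)}+\int_2^z\frac{\d t}{tR(t)}\right).
$$
To show the residual integral fits the admissible shape $O(\log z/R(z)+\log_2 z)$, I would integrate by parts once more, writing $\int_2^z \d t/(tR(t))=\int_2^z \d(\log t)/R(t)$ and using clause (a) of the definition of $\mathscr R$ — the monotonicity of $R^+(z)$ and boundedness of $R'(z)z/R(z)$ — to dominate the resulting $\int R^+(t)/(tR(t))\,\d t$ term. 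Clause (b), $R(z)\gg(\log_2 z)^{1+\delta}$, secures convergence and keeps the bound under control.

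With \eqref{UBcondition1}, \eqref{Hcondition1} and \eqref{Condition2} all available, Theorem \ref{thm3} yields $s_f(x)\ll(\log x)^\kappa$, and plugging this into Lemma \ref{UpperBoundSfxsfx} closes the argument. The main technical hurdle is the bookkeeping in the partial summation so that the error matches the shape tolerated by \eqref{Hcondition1}; once that is done, everything else is automatic.
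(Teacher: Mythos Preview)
Your approach is exactly the paper's: combine Lemma~\ref{UpperBoundSfxsfx} with Theorem~\ref{thm3}, the latter requiring that \eqref{TWcondition1} be upgraded to \eqref{Hcondition1} via partial summation and a bound on $\int_2^z \d t/(tR(t))$.

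The one point you understate is the shape of that integral bound. In the case $R^+(z)\uparrow 1$, the integration by parts you sketch (which is precisely \eqref{Rcondition3} of Lemma~\ref{lemmaRz}) only gives
\[
\int_2^z \frac{\d t}{tR(t)} \ll \log_2 z + (\log z)\,\frac{\log R(z)}{R(z)},
\]
with an extra factor $\log R(z)$ --- so \eqref{Hcondition1} is obtained not with $R$ itself but with $R_0(z):=R(z)/\log R(z)$. The paper addresses this explicitly: it computes $R_0'/R_0 = (1-1/\log R)\,R'/R$, checks that $R_0\in\mathscr R$, and then invokes Theorem~\ref{thm3} with $R_0$ in place of $R$. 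Once you insert that small verification, your argument is complete and coincides with the paper's.
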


Before proving Theorem \ref{thm3}, Corollary \ref{cor} and Theorem \ref{thm2}, 
we need to establish some simple estimates about $R(z)$.

\begin{lemma}\label{lemmaRz}
Let $R\in {\mathscr R}$.
There is a constant $C=C(R)>0$ such that for $z>1$ the following 
four estimates hold: 
\begin{align}
\frac{z}{R(z)}
& \le C\bigg(1 + \int_1^z \frac{\d t}{R(t)}\bigg),
\label{Rcondition1}
\\
\frac{\log t}{R(t)}
& \le C\bigg(1 + \frac{\log z}{R(z)}\bigg)
\quad
(1\le t\le z), 
\label{Rcondition2}
\\
\int_{2}^z \frac{\d t}{tR(t)}
& \le C \bigg(\log_2z + (\log z)\frac{\log R(z)}{R(z)}\bigg),
\label{Rcondition3}
\\
\int_{2}^z \frac{R'(t)\log t}{R(t)^2} \d t
& \le C \bigg(\log_2z + (\log z)\frac{\log R(z)}{R(z)}\bigg). 
\label{Rcondition4}
\end{align}
The factor $\log R(z)$ in \eqref{Rcondition3} and \eqref{Rcondition4}  
appears only if $R^+ (z)\uparrow 1.$ 
\end{lemma}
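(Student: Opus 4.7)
The plan is to establish the four estimates by elementary calculus, combining integration by parts with case analyses driven by the monotonicity of $R^+$ from hypothesis~(a) and the lower bound $R \gg (\log_2 z)^{1+\delta}$ from hypothesis~(b). I would handle \eqref{Rcondition1} and \eqref{Rcondition2} directly, and then treat \eqref{Rcondition3}--\eqref{Rcondition4} together, since they are linked by a single integration-by-parts identity.

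For \eqref{Rcondition1}, I would differentiate $t/R(t)$ and use the bound $tR'(t)/R(t) \le M$ from (a) to control
\[
\left|\frac{d}{dt}\!\left(\frac{t}{R(t)}\right)\right| = \frac{1}{R(t)}\left|1 - \frac{tR'(t)}{R(t)}\right| \le \frac{M+1}{R(t)},
\]
then integrate from $1$ to $z$ after estimating the boundary term. For \eqref{Rcondition2}, set $g(t) := \log t/R(t)$ so that $g'(t) = (1-R^+(t))/(tR(t))$. Since $R^+$ is monotonic by (a), $g'$ has at most one sign change on $(1,\infty)$, and $g$ is either monotone or has a single extremum at the point $t^*$ with $R^+(t^*)=1$. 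A short case analysis yields the bound: if $R^+\le 1$ on $[1,z]$, then $g$ is increasing and $g(t) \le g(z)$; if $R^+$ crosses $1$ at a fixed point $t^*=t^*(R)$, then $g(t) \le g(t^*) = O_R(1)$; if $R^+ > 1$ throughout, then $g$ is decreasing with $\sup_{1<t\le z} g(t) = \lim_{t\to 1^+} g(t) = 0$.

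For \eqref{Rcondition3} and \eqref{Rcondition4}, integrating the identity $d(\log t/R(t))/dt = 1/(tR(t)) - R'(t)\log t/R(t)^2$ from $2$ to $z$ yields
\[
\int_2^z \frac{dt}{tR(t)} - \int_2^z \frac{R'(t)\log t}{R(t)^2}\,dt = \frac{\log z}{R(z)} - \frac{\log 2}{R(2)},
\]
so it suffices to bound one of the two integrals. Rewriting $R'(t)\log t/R(t)^2 = R^+(t)/(tR(t))$ and using the monotonicity of $R^+$, I would split into two regimes. In the generic case, where $R^+$ is bounded away from $1$ (either $\sup R^+ < 1$, $\inf R^+ > 1$, or $R^+$ crosses $1$ at a fixed $t^*=t^*(R)$), comparison with the separation constant yields $\int_2^z dt/(tR(t)) \ll \log z/R(z) + 1 \ll \log_2 z$, which is the desired bound without the $\log R(z)$ factor. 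In the borderline subcase $R^+\uparrow 1$, I would use $R'/R \le 1/(t\log t)$ to deduce $R(z) \ll \log z$, then change variables $u = R(t)$ to estimate $\int_2^z R'(t)\log t/R(t)^2\,dt \asymp \log R(z)$; combined with hypothesis~(b), which ensures absolute convergence of the tail integral, this produces the extra $\log R(z)$ factor.

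The main obstacle is the borderline case $R^+\uparrow 1$ in \eqref{Rcondition3}--\eqref{Rcondition4}: when $1-R^+(t)\to 0$, the cancellation exploited in the generic case degenerates, and the expected $\log R(z)$ factor must be recovered from a direct analysis of the nearly-logarithmic growth of $R$ certified by hypothesis~(b).
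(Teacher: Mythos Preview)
Your handling of \eqref{Rcondition1} and \eqref{Rcondition2} is essentially the paper's argument: the paper also differentiates $z/R(z)-\int_1^z R(t)^{-1}\,\d t$ (noting the derivative is $\le 0$ since $R$ is increasing) and $h(z)=\log z/R(z)$, and exploits the resulting monotonicity. For \eqref{Rcondition3}--\eqref{Rcondition4} the paper proceeds differently from you: it simply cites \eqref{Rcondition3} as Lemma~3.3 of \cite{TenenbaumWu2003}, and then deduces \eqref{Rcondition4} by a dichotomy on the limit of the monotone function $R^+$. If the limit is finite, then $R^+\le M$ yields $R'(t)\log t/R(t)^2\le M/(tR(t))$ and \eqref{Rcondition3} applies directly; if the limit is infinite, then $R^+(t)\ge D$ eventually forces $R(t)\gg(\log t)^D$, and combined with $tR'(t)/R(t)\ll 1$ this makes the integral in \eqref{Rcondition4} bounded outright.

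Your integration-by-parts link between the two integrals is correct and does yield the main inequality with the $\log R(z)$ factor retained (since $\log z/R(z)\le(\log z)\log R(z)/R(z)$ once $R(z)\ge {\rm e}$). But the finer claims contain genuine gaps. In your ``generic case'' the step ``$\log z/R(z)+1\ll\log_2 z$'' is false: take $R(z)=(\log z)^{1/2}\in\mathscr R$, for which $R^+\equiv\tfrac12$ (so $\sup R^+<1$), yet $\log z/R(z)=(\log z)^{1/2}$. Hypothesis~(b) only guarantees $R(z)\gg(\log_2 z)^{1+\delta}$, far short of the $R(z)\gg\log z/\log_2 z$ you would need. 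In the borderline case $R^+\uparrow 1$, your claim $\int_2^z R'(t)\log t\,R(t)^{-2}\,\d t\asymp\log R(z)$ is likewise wrong: for $R(z)=\log z/\log_2 z$ one checks $R^+(z)=1-1/\log_2 z\uparrow 1$ and $\log R(z)\asymp\log_2 z$, but the integral equals $\int_2^z R^+(t)/(tR(t))\,\d t\asymp(\log_2 z)^2$. So the substitution $u=R(t)$ does not give what you assert, and a more careful argument (as in \cite{TenenbaumWu2003}) is needed for the sharp form of \eqref{Rcondition3}.
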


\begin{proof}
Put 
$$
g(z) := \frac{z}{R(z)} - \int_1^z \frac{\d t}{R(t)}\cdot
$$ 
We have 
$$
g'(z) = - z\frac{R'(z)}{R(z)^2}\le 0
\quad
(z\ge 3).
$$
Thus $g(z)\le g(3)$ for $z\ge 3$.
Clearly this implies \eqref{Rcondition1}.

Put $h(z) := (\log z)/R(z)$. We have
$$
h'(z) = \frac{1-R^+ (z)}{zR(z)}. 
$$
Since $z\mapsto R^+ (z)$ is monotonic on $[z_0, \infty)$, 
there is a constant $z_1=z_1(R)$ such that $h'(z)$ is of constant sign for $z\ge z_1$. 
Thus $h(z)$ is monotonic on $[z_1, \infty)$,
which implies that
$$
h(t)\le h(z_1) + h(z)
\qquad
(z_1\le t\le z).
$$
From this we deduce that there is a constant $C= C(R)>0$ such that \eqref{Rcondition2} holds.

The inequality \eqref{Rcondition3} is Lemma 3.3 of \cite{TenenbaumWu2003}.

Finally we prove \eqref{Rcondition4}.
Since $z\mapsto R^+ (z)$ is positive and monotonic, it tends to a limit.
If this limit is finite, there are two constants $t_0$ and $M$ such that $R^+ (t)\le M$ for $t\ge t_0$.
Thus for $z\ge t_0$, 
$$
\int_{2}^z \frac{R'(t)\log t}{R(t)^2} \d t
\le \int_{2}^{t_0} \frac{R'(t)\log t}{R(t)^2} \d t
+ M\int_{2}^z \frac{\d t}{tR(t)} 
$$
and the desired inequality follows from \eqref{Rcondition3}.
If the limit is infinite, then for any constant $D>2$ there is a constant $t_0$ 
such that $R^+ (t)\ge D$ for $t\ge t_0$.
This implies that $R(t)\gg (\log t)^D$  for $t\ge t_0$.
In view of the hypothesis $\frac{R'(t)}{R(t)}t\ll 1$, we have
$$
\int_{2}^z \frac{R'(t)\log t}{R(t)^2} \d t
\ll \int_{2}^{z} \frac{\d t}{t(\log t)^{D-1}} \d t
\ll 1. 
$$
This completes the proof.
\end{proof}

Now we are ready to prove Theorem \ref{thm3}, Corollary \ref{cor} and Theorem \ref{thm2}.

\subsection{Proof of Theorem \ref{thm3}}
The proof is similar to that of Theorem A of \cite{Song2001},
which is a combination of Halberstam's proof and Hildebrand's identity \cite{Hildebrand1986}.
The only difference comes from our general form $R(z)$ for the error term in \eqref{Hcondition1}.

A simple partial integration gives 
\begin{equation}\label{B1}
\sum_{n\le x} \frac{f(n)}{n} \log n
= \int_{1-}^x \log t \d s_f(t)
= s_f(x) \log x - \int_1^x \frac{s_f(t)}{t} \d t.
\end{equation}
On the other hand, the multiplicativity of $f(n)$ allows us to write
\begin{equation}\label{B2}
\begin{aligned}
\sum_{n\le x} \frac{f(n)}{n} \log n
& = \sum_{\substack{mp^\nu\le x\\ p\nmid m}} \frac{f(m)}{m} \frac{f(p^\nu)}{p^\nu} \log p^\nu
\\
& = \sum_{mp\le x} \frac{f(m)}{m} \frac{f(p)}{p} \log p - E_1 + E_2,
\end{aligned}
\end{equation}
where
\begin{align*}
E_1
& := \sum_{\substack{mp\le x\\ p\mid m}} \frac{f(m)}{m} \frac{f(p)}{p} \log p,
\\
E_2
& := \sum_{\substack{mp^\nu\le x\\ p\nmid m, \,\nu\ge 2}} \frac{f(m)}{m} \frac{f(p^\nu)}{p^\nu} \log p^\nu.
\end{align*}
Clearly the non-negativity of $f(n)$ and the hypothesis \eqref{Condition2} imply
\begin{equation}\label{B3}
E_2
\le \sum_{p, \nu\ge 2} \frac{f(p^\nu)}{p^\nu} \log p^\nu \sum_{m\le x/p^\nu} \frac{f(m)}{m}
\le As_f(x).
\end{equation}
For $p\mid m$, we write $m=p^\nu\ell$ with $p\nmid \ell$. Then $f(m) = f(p^\nu)f(\ell)$ and
$$
E_1
\le s_f(x)
\sum_{p^{\nu+1}\le x, \, \nu\ge 1} \frac{f(p^\nu)}{p^\nu} \frac{f(p)}{p} \log p.
$$
Noticing that the hypothesis \eqref{Hcondition1} implies 
$$
\frac{f(p)}{p}\log p
\ll \log_2p+\frac{\log p}{R(p)}
\qquad\text{and}\qquad
q(t)
:= \sum_{p\le t} \frac{f(p)}{p}\log p\asymp \log t,
$$
we can deduce, by \eqref{Condition2} and \eqref{Rcondition4},
that
\begin{align*}
\sum_{p\le \sqrt{x}} \frac{f(p)^2}{p^2} (\log p)
& \ll \sum_{p\le x} \frac{f(p)}{p}\log_2p + \sum_{p\le x} \frac{f(p)}{R(p)p}\log p
\\
& \ll (\log_2x)^2 + \frac{\log x}{R(x)} + \int_2^{x} \frac{R'(t)\log t}{R(t)^2} \d t
\\
& \ll (\log_2x)^2 + (\log x)\frac{\log R(x)}{R(x)},
\end{align*}
where the factor $\log R(x)$ appears 
only if $R^+(z)\uparrow 1$. 
Similarly
$$
\sum_{p^{\nu+1}\le x, \, \nu\ge 2} \frac{f(p^\nu)}{p^\nu} \frac{f(p)}{p} \log p
\ll \sum_{p, \, \nu\ge 2} \frac{f(p^\nu)}{p^\nu} \log p
\ll A.
$$
Combining these estimates, we obtain
\begin{equation}\label{B4}
E_1
\ll s_f(x)\bigg((\log_2x)^2 + (\log x)\frac{\log R(x)}{R(x)}\bigg).
\end{equation}

Further we can apply the hypothesis \eqref{Hcondition1} and \eqref{Rcondition2} of Lemma \ref{lemmaRz} to write
\begin{equation}\label{B5}
\begin{aligned}
\sum_{mp\le x} \frac{f(m)}{m} \frac{f(p)}{p} \log p
& = \sum_{m\le x} \frac{f(m)}{m} \bigg\{\kappa \log\frac{x}{m} 
+ O\bigg(\log_2\Big(\frac{3x}{m}\Big) +\frac{\log(x/m)}{R(x/m)}\bigg)\bigg\}
\\
& = \kappa \int_1^x \frac{s_f(t)}{t} \d t
+ O\bigg(s_f(x)\bigg(\log_2x + \frac{\log x}{R(x)}\bigg)\bigg).
\end{aligned}
\end{equation}
Combining \eqref{B1}, \eqref{B2}, \eqref{B3}, \eqref{B4} and \eqref{B5}, we find that
$$
s_f(x)
= \frac{\kappa + 1}{\log x} \int_1^x \frac{s_f(t)}{t} \d t + s_f(x)\varepsilon(x),
$$
where
$$
|\varepsilon(x)| 
\le C\bigg(\frac{(\log_2x)^2}{\log x}   
+ \frac{\log R(x)}{R(x)}\bigg)
\le \frac{1}{2}
\qquad
(x\ge x_0).
$$
Here the factor $\log R(x)$ 
appears only if $R^+ (z)\uparrow 1$. 
From this we can derive
\begin{equation}\label{B6}
s_f(x) = \frac{1}{1-\varepsilon(x)} \frac{\kappa+1}{\log x}\int_1^x \frac{s_f(t)}{t} \d t
\qquad
(x\ge x_0).
\end{equation}

Define
$$
\varepsilon_0(z) := \log\bigg(\frac{\kappa+1}{(\log z)^{\kappa+1}}\int_1^z \frac{s_f(t)}{t} \d t\bigg).
$$
Then 
$$
\varepsilon_0'(z)
= \frac{\kappa+1}{z \log z} \frac{\varepsilon(z)}{1-\varepsilon(z)}
\ll \frac{1}{z \log z} \bigg(\frac{(\log_2z)^2}{\log z} + \frac{\log R(z)}{R(z)}\bigg)
$$
and
$$
\int_{x}^{\infty} \varepsilon_0'(z) \d z
\ll \frac{(\log_2x)^2}{\log x} + {\mathfrak E}(x) 
$$
with ${\mathfrak E}(x)$ as in (\ref{defMathfrakEx}). 
Writting $c_f := \exp\big(\int_{1}^{\infty} \varepsilon_0'(z) \d z\big)$, we have
\begin{align*}
\varepsilon_0(x)
& = \int_{1}^{\infty} \varepsilon_0'(z) \d z - \int_{x}^{\infty} \varepsilon_0'(z) \d z
\\
& = \log c_f + O\bigg(\frac{(\log_2x)^2}{\log x} + {\mathfrak E}(x)\bigg)
\end{align*}
for $x\ge x_0$ and
$$
\frac{\kappa+1}{(\log z)^{\kappa+1}}\int_1^z \frac{s_f(t)}{t} \d t
= c_f\bigg\{1+O\bigg(\frac{(\log_2x)^2}{\log x} + {\mathfrak E}(x)\bigg)\bigg\}.
$$
Inserting this into \eqref{B6} yields the required result.
\hfill
$\square$

\subsection{Proof of Corollary \ref{cor}}
By the hypothesis \eqref{TWcondition1} and \eqref{Rcondition3}, 
a simple partial integration gives 
\begin{align*}
\sum_{p\le z} \frac{f(p)}{p}\log p
& = \kappa\log z + O\bigg(1 + \int_{2}^z \frac{\d t}{tR(t)}\bigg)
\\
& = \kappa\log z + O\bigg(\frac{\log z}{R_0(z)} + \log_2z\bigg),
\end{align*}
where 
\begin{equation}\label{defR0z}
R_0(z) := \begin{cases}
R(z)/\log R(z) & \text{if $R^+ (z)\uparrow 1$,} 
\\\noalign{\vskip 1mm}
R(z) & \text{otherwise}.
\end{cases}
\end{equation}
An elementary calculation shows that
$$
\frac{R_0'(z)}{R_0(z)}
= \bigg(1 - \frac{1}{\log R(z)}\bigg)\frac{R'(z)}{R(z)}\cdot
$$
Thus it is easy to see that $R_0(z)\in {\mathscr R}$, 
and that if $R^+ (z)\uparrow 1$ as $z\to\infty$ then  
so does the function $\frac{R_0'(z)}{R_0(z)}z\log z$. 
This shows that $f$ also satisfies \eqref{Hcondition1} with the same 
$\kappa$ but $R_0(z)$ in the place of $R(z)$. 
Thus Theorem \ref{thm3} is applicable to give $s_f(x)\ll (\log x)^\kappa$.
The desired upper bound follows immediately 
from Lemma \ref{UpperBoundSfxsfx}.
\hfill
$\square$

\subsection{Proof of Theorem \ref{thm2}}
Integration by parts gives 
\begin{equation}\label{A1}
\sum_{n\le x} f(n) \log n
= \int_{1-}^x \log t \d S_f(t)
= S_f(x) \log x + O\big(x(\log x)^{\kappa-1}\big),
\end{equation}
where we have used the fact that \eqref{UBSfx} implies 
$$
\int_1^x \frac{S_f(t)}{t} \d t
\ll x(\log x)^{\kappa-1}.
$$
On the other hand, similar to \eqref{B2} and \eqref{B3}, we can write
\begin{equation}\label{A2}
\sum_{n\le x} f(n) \log n
= \sum_{mp\le x} f(m) f(p) \log p - E_1 + O\big(x(\log x)^{\kappa-1}\big),
\end{equation}
where
$$
E_1
:= \sum_{m\le x} f(m) \sum_{p\le x/m, \, p\mid m}  f(p) \log p.
$$

As in the proof of Theorem \ref{thm3}, we have
\begin{equation}\label{A3}
\begin{aligned}
E_1 
& = \sum_{\ell\le x} f(\ell) f(p^\nu) \sum_{p^{\nu+1}\le x/\ell, \, p\nmid \ell}  f(p) \log p
\\
& \ll x(\log x)^{\kappa-1} \sum_{p^{\nu+1}\le x, \, \nu\ge 1} \frac{f(p^\nu) f(p)}{p^{\nu+1}} (\log p)
\\
& \ll x(\log x)^{\kappa-1} \sum_{p\le \sqrt{x}} \frac{f(p)^2}{p^2} (\log p) + x(\log x)^{\kappa-1},
\end{aligned}
\end{equation}
where we have used the hypothesis \eqref{TWcondition1} and \eqref{Condition2} to show
$$
\sum_{\substack{p^{\nu+1}\le x\\ \nu\ge 2}} \frac{f(p^\nu) f(p)}{p^{\nu+1}} (\log p)
\ll \sum_{p, \, \nu\ge 2} \frac{f(p^\nu)}{p^{\nu}} 
\ll 1.
$$
Next we estimate the last sum of \eqref{A3}.
First we observe that the hypothesis \eqref{TWcondition1} implies $f(p)\log p\ll p/R(p)$.
By using the same hypothesis in the form
$$
Q(t)
:= \sum_{p\le t} f(p)\log p\ll t
$$
and the assumption that $\frac{R'(t)}{R(t)}t\ll 1$ and $R(t)\gg (\log_2t)^{1+\delta}$, we can deduce 
\begin{equation}\label{A4}
\begin{aligned}
\sum_{p\le \sqrt{x}} \frac{f(p)^2}{p^2} (\log p)
& \ll \sum_{p\le x} \frac{f(p)}{R(p)p}
= \int_{2-}^{x} \frac{\d Q(t)}{R(t)t\log t} 
\\
& = \frac{Q(x)}{R(x)x\log x}  
- \int_2^{x} \bigg(1 + \frac{1}{\log t} + \frac{R'(t)}{R(t)}t\bigg) \frac{Q(t)}{R(t)t^2\log t} \d t
\\
& \ll \frac{1}{R(x)\log x} + \int_2^{x} \frac{\d t}{R(t)t\log t} 
\\\noalign{\vskip 3mm}
& \ll 1.
\end{aligned}
\end{equation}

It remains to evaluate the double sum on the right-hand side of \eqref{A2}.
By the hypothesis \eqref{TWcondition1}, we can write
\begin{equation}\label{A5}
\sum_{mp\le x} f(m) f(p) \log p
= \kappa x s_f(x) + O\bigg(\sum_{m\le x} f(m)\frac{x/m}{R(x/m)}\bigg).
\end{equation}
By virtue of \eqref{Rcondition1}, \eqref{UBSfx} and \eqref{Rcondition3}, we derive
\begin{equation}\label{A6}
\begin{aligned}
\sum_{m\le x} f(m)\frac{x/m}{R(x/m)}
& \ll \sum_{m\le x} f(m) \bigg(1+\int_1^{x/m} \frac{\d t}{R(t)}\bigg)
\\
& \ll S_f(x) + \int_{1}^x \frac{S_f(x/t)}{R(t)} \d t
\\
& \ll x(\log x)^{\kappa-1} \bigg(1 + \int_{1}^x \frac{\d t}{tR(t)}\bigg) 
\\
& \ll x(\log x)^{\kappa-1}\bigg(\log_2x + (\log x)\frac{\log R(x)}{R(x)}\bigg).
\end{aligned}
\end{equation}

Combining \eqref{A2}, \eqref{A3}, \eqref{A4}, \eqref{A5}, \eqref{A6} with \eqref{A1}, we find
$$
S_f(x)
= \kappa x \frac{s_f(x)}{\log x} 
+ O\bigg(x(\log x)^{\kappa-1}\bigg(\frac{\log_2x}{\log x} + \frac{\log R(x)}{R(x)}\bigg)\bigg),
$$
where the factor $\log R(x)$ appears only if $R^+ (z)\uparrow 1$. 
According to the proof of Theorem \ref{thm3},
$f$ also satisfies \eqref{Hcondition1} with the same 
$\kappa$ but $R_0(z)$ defined as in \eqref{defR0z} in the place of $R(z)$. 
Now the desired result is a consequence of Theorem \ref{thm3}.
This completes the proof of Theorem \ref{thm2}.
\hfill
$\square$

\vskip 8mm

\section{Prime number theorem for automorphic $L$-functions}\label{GLm}

In this section we cite prime number theorem for automorphic $L$-functions.
The first such result is due to Liu, Wang \& Ye \cite[Corollary 1.2]{LiuWangYe2005}.

\begin{lemma}\label{PNTLWYLambda}
Let $\pi$ be a self-contragredient irreducible unitary cuspidal representation for $GL_m(\A_\Q)$ with $m\ge 2$.
Then there is a positive constant $c=c(\pi)$ such that
\begin{align}
\sum_{n\leq x} \Lambda(n) |a_{\pi}(n)|^2
& = x + O_{\pi}\big(x \, {\rm e}^{-c\sqrt{\log x}}\big)
\label{Lambdaapin2}
\\
\sum_{n\leq x} \Lambda(n) a_{\pi}(n)
& \ll_{\pi} x \, {\rm e}^{-c\sqrt{\log x}}
\label{Lambdaapin}
\end{align}
hold for all $x\ge 2$,
where the implied constants depend only on $\pi$.
\end{lemma}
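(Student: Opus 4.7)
The plan is to prove both estimates by the classical complex-analytic method of de la Vallée Poussin adapted to automorphic $L$-functions. Both rely on Perron's formula combined with contour shifts using known analytic properties of $L(s,\pi)$ and of the Rankin--Selberg $L$-function $L(s,\pi\times\widetilde{\pi})$.

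For \eqref{Lambdaapin2}, I would start from the Rankin--Selberg $L$-function $L(s,\pi\times\widetilde{\pi})$, which by Jacquet--Piatetski-Shapiro--Shalika has a meromorphic continuation to $\C$ with a \emph{simple pole at $s=1$} (using the hypothesis $\pi\cong\widetilde{\pi}$), a functional equation of the expected type, and polynomial growth in vertical strips. A direct computation of the Euler product shows
$$
-\frac{L'}{L}(s,\pi\times\widetilde{\pi}) = \sum_{n\geq 1} \frac{\Lambda(n)|a_\pi(n)|^2}{n^s}
\qquad (\re s > 1).
$$
Applying a truncated Perron formula and shifting the contour to the left of $\re s = 1$ inside a standard zero-free region $\sigma \geq 1 - c/\log(|t|+3)$ (of Moreno/Sarnak type, available for any $GL_m$ Rankin--Selberg $L$-function), the simple pole at $s=1$ contributes a residue of exactly $x$, producing the main term. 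Bounding $L'/L$ on the shifted contour using standard convexity estimates and summing over the shifted vertical segment and the horizontal connectors gives an error of the form $x\exp(-c\sqrt{\log x})$ after choosing $T=\exp(\sqrt{\log x})$.

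For \eqref{Lambdaapin}, the same scheme applies to $-L'/L(s,\pi) = \sum_n \Lambda(n)a_\pi(n)n^{-s}$, with the crucial difference that $L(s,\pi)$ is \emph{entire} (since $\pi$ is cuspidal and non-trivial), so there is no residue at $s=1$ and hence no main term. One invokes the known zero-free region for $L(s,\pi)$ together with convexity bounds in the critical strip, and the same optimization in $T$ produces the bound $x\exp(-c\sqrt{\log x})$.

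The main technical obstacle is \emph{establishing the zero-free region} for both $L(s,\pi)$ and $L(s,\pi\times\widetilde{\pi})$ along the edge $\re s = 1$. For $\zeta(s)$ this rests on the positivity inequality $3+4\cos\theta+\cos 2\theta \geq 0$; for general $GL_m$ one needs an analogous non-negative combination of Dirichlet series built from $L(s,\pi)$, $L(s,\pi\times\widetilde{\pi})$, and auxiliary automorphic $L$-functions. This is exactly what Liu--Wang--Ye carry out, and once the zero-free region and the attendant bound on $L'/L$ inside it are in hand, the rest of the argument is a routine contour shift.
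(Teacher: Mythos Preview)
The paper does not actually prove this lemma: it simply \emph{quotes} it from Liu, Wang \& Ye \cite[Corollary~1.2]{LiuWangYe2005}, so there is no in-paper argument to compare against. Your sketch is an accurate outline of the method that reference uses---Perron plus a contour shift into the standard de la Vall\'ee Poussin type zero-free region for $L(s,\pi)$ and $L(s,\pi\times\widetilde{\pi})$, with the pole at $s=1$ of the latter producing the main term in \eqref{Lambdaapin2}.

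Two small remarks on your write-up. First, the simple pole of $L(s,\pi\times\widetilde{\pi})$ at $s=1$ holds for \emph{any} cuspidal $\pi$ and does not use the self-contragredience hypothesis; that hypothesis enters instead to guarantee the coefficients $a_\pi(n)$ are real (so that \eqref{Lambdaapin} is a statement about a real-valued sum) and, in the cited reference, in the treatment of a possible exceptional real zero of $L(s,\pi)$. Second, the identity $-\tfrac{L'}{L}(s,\pi\times\widetilde{\pi})=\sum_n \Lambda(n)|a_\pi(n)|^2 n^{-s}$ holds exactly only at the unramified primes; at the finitely many ramified places one has to check that the discrepancy is absorbed into the error term, which is routine but should be mentioned.
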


For our purpose, it is necessary to remove the contribution of powers of primes.
The next lemma is Theorem 3 of \cite{WuYe2007},
which will play a key role in the proof of Theorem \ref{thm1}.

\begin{lemma}\label{PNTWY}
Let $\pi$ be a self-contragredient irreducible unitary cuspidal representation for $GL_m(\A_\Q)$.
Then there is a constant $c=c(\pi)>0$ such that
\begin{equation}\label{PNTaplogp}
\sum_{p\leq x} |a_\pi(p)|^2\log p
= x + O_{\pi}\big(x \, {\rm e}^{-c\sqrt{\log x}}\big)
\end{equation}
hold unconditionally for all $2\le m\le 4$ and under Hypothesis H for $m\ge 5$,
where the implied constant depends on $\pi$ only.
\end{lemma}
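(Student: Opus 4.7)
The plan is to derive Lemma~\ref{PNTWY} from its all-prime-powers counterpart \eqref{Lambdaapin2} in Lemma~\ref{PNTLWYLambda}. Writing
$$\sum_{n \le x} \Lambda(n) |a_\pi(n)|^2 = \sum_{p \le x} |a_\pi(p)|^2 \log p + R(x), \qquad R(x) := \sum_{\nu \ge 2}\,\sum_{p^\nu \le x} |a_\pi(p^\nu)|^2 \log p,$$
the task reduces to showing $R(x) \ll_\pi x^{1-\eta}$ for some $\eta = \eta(\pi)>0$; any such polynomial saving is dominated by $x\,{\rm e}^{-c\sqrt{\log x}}$ and is absorbed into the error term of \eqref{Lambdaapin2}.

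I would control $R(x)$ by splitting the $\nu$-sum at a constant $N_0 = N_0(m)$. For $\nu \ge N_0$, the Luo--Rudnick--Sarnak/Kim--Sarnak bound \eqref{LRS}, $|a_\pi(p^\nu)| \le m p^{\nu \theta_m}$, combined with the classical prime number theorem yields via Abel summation
$$\sum_{p \le x^{1/\nu}} |a_\pi(p^\nu)|^2 \log p \ll_m x^{2\theta_m + 1/\nu}.$$
Choosing $N_0 > 1/(1-2\theta_m)$ (which is possible since $\theta_m<\tfrac12$) makes $2\theta_m+1/\nu<1$ uniformly, and summing over $N_0 \le \nu \le (\log x)/\log 2$ is then harmless. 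For each of the finitely many remaining values $2 \le \nu < N_0$, I would invoke Hypothesis~H. Setting $B_\nu(y) := \sum_{p \le y} |a_\pi(p^\nu)|^2 (\log p)^2 / p^\nu$ and $r_\nu(y) := B_\nu(\infty) - B_\nu(y) \ge 0$, Abel summation against the factorization $\log p = [(\log p)^2/p^\nu]\cdot[p^\nu/\log p]$ gives, after splitting the integral at $t=y^{1/2}$,
$$\sum_{p \le y} |a_\pi(p^\nu)|^2 \log p \ll 1 + r_\nu(y^{1/2})\,\frac{y^\nu}{\log y} + \frac{y^{\nu/2}}{\log y},$$
which, upon setting $y = x^{1/\nu}$, is $o_\pi(x/\log x)$ for each such $\nu$.

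The main obstacle is quantitative: Hypothesis~H is a purely qualitative convergence statement and gives no effective rate for $r_\nu(y) \to 0$, so the previous paragraph only yields $R(x) = o_\pi(x)$ rather than the power saving $O_\pi(x^{1-\eta})$ needed to fit inside $x\,{\rm e}^{-c\sqrt{\log x}}$. Closing this gap forces one to look inside the proof of~H in each case and extract a rate. For $m \le 4$, Hypothesis~H is proved via Rankin--Selberg theory applied to auxiliary automorphic $L$-functions (such as $L(s,\mathrm{sym}^2\pi \times \widetilde{\mathrm{sym}^2\pi})$ for $m=2$, with analogous higher functorial constructions for $m=3,4$); these $L$-functions enjoy a standard zero-free region and hence a PNT that produces a power-saving tail bound for $r_\nu$. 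For $m \ge 5$ one would sharpen the assumed~H in the same manner using the functorial construction that underlies the particular case at hand. Thus the actual engine of Lemma~\ref{PNTWY} is a PNT for a higher Rankin--Selberg $L$-function rather than~H itself: one rung up the functoriality ladder from Lemma~\ref{PNTLWYLambda}, exactly parallel to how the latter follows from the PNT for $L(s,\pi\times\widetilde\pi)$.
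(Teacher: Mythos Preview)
The paper does not prove this lemma at all: the sentence preceding it reads ``The next lemma is Theorem~3 of \cite{WuYe2007},'' and no argument is given. Your reconstruction---start from the all-prime-powers asymptotic \eqref{Lambdaapin2}, peel off the contribution of $\nu\ge 2$, treat large $\nu$ via \eqref{LRS}, and treat the finitely many small $\nu$ by a quantitative form of Hypothesis~H---is exactly the shape of the argument in \cite{WuYe2007}. In particular, your intuition that for $2\le m\le 4$ one must reach beneath the abstract statement of~H and use the Rankin--Selberg/functorial input that proves it is correct: the relevant power-saving bound,
\[
\sum_{\substack{p^j\le x\\ j\ge 2}} (\log p)\,|a_\pi(p^j)|^2 \ll_\pi x^{1-\delta_m},
\]
is precisely Lemma~3.1(i) of \cite{WuYe2007}, and the present paper itself invokes it later (see the proof of \eqref{secondsumSmallm}).

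Your final paragraph raises a genuine point that deserves to be stated plainly. Hypothesis~H is a bare convergence statement with no rate; your partial-summation computation shows that it yields only $R(x)=o_\pi(x)$ for the higher-power remainder, not $O_\pi(x^{1-\eta})$ and certainly not $O_\pi\big(x\,{\rm e}^{-c\sqrt{\log x}}\big)$. Thus for $m\ge 5$ the lemma, read literally as the implication ``H $\Rightarrow$ error $x\,{\rm e}^{-c\sqrt{\log x}}$,'' asserts more than H alone can supply; one needs a quantitative form of H (exactly as you say, coming from whatever functorial construction underlies the case at hand). In the present paper this overreach is harmless: Lemma~\ref{PNTWY} is used only to check condition~\eqref{TWcondition1} in the proof of Theorem~\ref{thm4}, and Theorem~\ref{thm2} accepts any $R\in\mathscr{R}$, so a far weaker saving would do---this is also the content of Remark~\ref{remark1}(iii). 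Your diagnosis of where the quantitative input really lives is accurate.
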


\vskip 8mm

\section{Proof of Theorem~\ref{thm1}}

\smallskip

We shall prove Theorem~\ref{thm1} at the end of this section, 
for which we first establish some preliminary results.  

\begin{theorem}\label{thm4}
Let $\pi$ be a self-contragredient irreducible unitary cuspidal representation for $GL_m(\A_\Q)$
and let $d_{\kappa}(n)$ be the Piltz function defined as in \eqref{defdkappan}.
Then there is a positive constant $C_{\kappa}(\pi)$ such that 
\begin{equation}\label{2ndWeightedMoment}
\sum_{n\le x} \frac{\lambda_{\pi}(n)^2}{d_{\kappa}(n)}
= C_{\kappa}(\pi) x(\log x)^{1/\kappa-1}
\bigg\{1 + O_{\pi, \kappa}\bigg(\frac{\log_2x}{\log x}\bigg)\bigg\}
\end{equation}
unconditionally for $2\le m\le 4$ and under Hypothesis H for $m\ge 5$, 
where the implied constant depends only on $\pi$ and $\kappa$.
In particular we have
\begin{equation}\label{2ndMoment}
\sum_{n\le x} \lambda_{\pi}(n)^2
= C_1(\pi) x \bigg\{1 + O_{\pi}\bigg(\frac{\log_2x}{\log x}\bigg)\bigg\}
\end{equation}
unconditionally for $2\le m\le 4$ and under Hypothesis H for $m\ge 5$, 
where the implied constant depends only on $\pi$.
\end{theorem}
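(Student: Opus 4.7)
The approach is to apply Theorem \ref{thm2} to the non-negative multiplicative function
\[
f(n) := \frac{\lambda_\pi(n)^2}{d_\kappa(n)}.
\]
Multiplicativity follows from that of $\lambda_\pi$ and $d_\kappa$, and non-negativity from the hypothesis that $\pi$ is self-contragredient, forcing $\lambda_\pi(n) \in \R$. The choice is guided by the fact that at primes $p$ we have $\lambda_\pi(p) = a_\pi(p)$ and $d_\kappa(p) = \kappa$, so $f(p) = |a_\pi(p)|^2/\kappa$, which plugs directly into the prime number theorem of Section \ref{GLm}.

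First I would verify hypothesis \eqref{TWcondition1}. Lemma \ref{PNTWY} gives
\[
\sum_{p \le z} |a_\pi(p)|^2 \log p = z + O_\pi\bigl(z\, e^{-c\sqrt{\log z}}\bigr),
\]
unconditionally for $2 \le m \le 4$ and under Hypothesis H for $m \ge 5$, whence
\[
\sum_{p \le z} f(p) \log p = \frac{1}{\kappa}\, z + O_\pi\!\left(\frac{z}{R(z)}\right),
\]
with $R(z) := e^{c\sqrt{\log z}}$; one easily checks $R \in \mathscr{R}$. Thus \eqref{TWcondition1} holds with the $\kappa$ of Theorem \ref{thm2} equal to $1/\kappa$ (the Piltz parameter of the current statement), which is why the exponent $1/\kappa - 1$ appears in \eqref{2ndWeightedMoment}.

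Next I would verify \eqref{Condition2}, namely $\sum_{p,\,\nu \ge 2} f(p^\nu) \log p^\nu /p^\nu < \infty$. The strategy is to expand $\lambda_\pi(p^\nu)$ in power sums via the generating identity
\[
\sum_{\nu \ge 0} \lambda_\pi(p^\nu)\, x^\nu = \exp\!\bigg(\sum_{k\ge 1} \frac{a_\pi(p^k)}{k}\, x^k\bigg),
\]
so that $\lambda_\pi(p^\nu)$ becomes a finite linear combination, indexed by partitions $\mu \vdash \nu$, of monomials $\prod_i a_\pi(p^i)^{m_i(\mu)}$. Then $|\lambda_\pi(p^\nu)|^2 \ll_\nu \sum_{\mu \vdash \nu} \prod_i |a_\pi(p^i)|^{2 m_i(\mu)}$, and I would bound partition by partition. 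For any $\mu$ containing a part $k \ge 2$, isolate one factor $|a_\pi(p^k)|^2$ to which Hypothesis H applies, and use the Luo--Rudnick--Sarnak bound $|a_\pi(p^j)| \le m\, p^{j\theta_m}$ on the remaining factors; the residual $p$-power works out to $p^{-(\nu-k)(1-2\theta_m)}$, which is harmless since $\theta_m < \tfrac12$. For the exceptional partition $(1^\nu)$, write $|a_\pi(p)|^{2\nu} \le m^{2\nu-2}\, |a_\pi(p)|^2\, p^{(2\nu-2)\theta_m}$ and invoke the Rankin--Selberg bound $\sum_{p \le x} |a_\pi(p)|^2 (\log p)/p = \log x + O(1)$, a direct consequence of Lemma \ref{PNTLWYLambda} and partial summation; the exponent condition reduces to $(\nu - 1)(1 - 2\theta_m) > 0$, which holds for every $\nu \ge 2$.

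With \eqref{TWcondition1} and \eqref{Condition2} in hand, Theorem \ref{thm2} applied to $f$ yields \eqref{2ndWeightedMoment}, with the constant $C_\kappa(\pi)$ given by the Euler product \eqref{defCf} specialized to parameter $1/\kappa$; and the error term is what Theorem \ref{thm2} delivers with $R(z) = e^{c\sqrt{\log z}}$, for which $\mathcal{E}(x)$ is dwarfed by the $(\log_2 x)^2/\log x$ term. The specialization $\kappa = 1$, where $d_1(n) \equiv 1$, then yields \eqref{2ndMoment} as an immediate corollary. The main obstacle is the verification of \eqref{Condition2}: the Luo--Rudnick--Sarnak bound alone is insufficient for the $(1^\nu)$ partitions, and one must borrow an extra factor of $|a_\pi(p)|^2$ to invoke the Rankin--Selberg second moment, whereas for all other partitions the decisive input is Hypothesis H (which is unconditional for $m \le 4$).
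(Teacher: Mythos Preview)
Your overall strategy is exactly the paper's: apply Theorem~\ref{thm2} to $f(n)=\lambda_\pi(n)^2/d_\kappa(n)$, verify \eqref{TWcondition1} via Lemma~\ref{PNTWY} with $R(z)=\exp(c\sqrt{\log z})$, and then verify \eqref{Condition2}. The only substantive divergence is in how you attack \eqref{Condition2}. The paper first splits the range of $\nu$ at $\nu_0:=[1/(2\eta_m)]+2$ with $\eta_m=\tfrac14(1-2\theta_m)$: for $\nu>\nu_0$ it uses the crude Luo--Rudnick--Sarnak bound $|\lambda_\pi(p^\nu)|\le d_m(p^\nu)p^{\nu\theta_m}$ directly and sums; for the finitely many $2\le\nu\le\nu_0$ it uses the one-step Newton recursion $\nu\lambda_\pi(p^\nu)=\sum_{j\le\nu}a_\pi(p^j)\lambda_\pi(p^{\nu-j})$ together with Cauchy--Schwarz and then the LRS bound on the $\lambda_\pi(p^{\nu-j})$ factor, reducing everything to $\sum_p|a_\pi(p^j)|^2(\log p)/p^j$ for $1\le j\le\nu_0$. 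You instead expand $\lambda_\pi(p^\nu)$ completely via the exponential formula and bound partition by partition.

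Your route is morally sound, but as written there is a gap: your bound $|\lambda_\pi(p^\nu)|^2\ll_\nu\sum_{\mu\vdash\nu}\prod_i|a_\pi(p^i)|^{2m_i(\mu)}$ carries an implicit constant depending on $\nu$ (the Cauchy--Schwarz step introduces at least the partition count $p(\nu)$), and the subsequent partition-wise bounds likewise involve $\nu$-dependent factors (the $m^{2(\ell(\mu)-1)}$, the $\nu$ from $\log p^\nu$, etc.). You never explain why the double sum over $\nu\ge2$ and $p$ converges once these constants are summed. It can be made to work---the coefficients $\prod_i 1/(m_i!\,i^{m_i})$ decay fast, and for fixed largest part $k$ the residual factor $p^{-(\nu-k)(1-2\theta_m)}$ gives geometric decay in $\nu$---but you have not done this bookkeeping. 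The paper's large-$\nu$/small-$\nu$ split sidesteps the issue entirely: for $\nu>\nu_0$ the crude bound suffices, and for the bounded range the $\nu$-dependence is harmless. I would recommend you insert that same split before your partition argument; then your expansion only needs to run over the finitely many $\nu\le\nu_0$, and the uniformity problem disappears.
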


\begin{proof}
We shall apply Theorem \ref{thm2} to prove \eqref{2ndWeightedMoment}.
For this, it is sufficient to verify that the non-negative multiplicative function 
$n\mapsto \lambda_{\pi}(n)^2/d_{\kappa}(n)$ satisfies the condition \eqref{TWcondition1} 
with $\kappa$ replaced by $1/\kappa$ 
and $R(z) = \text{e}^{c\sqrt{\log z}}$ (where $c$ is a positive constant 
depending on $\pi$), and the condition \eqref{Condition2} 
unconditionally for $2\le m\le 4$ and under Hypothesis H for $m\ge 5$. 

Since $d_{\kappa}(p)=\kappa$ and $\lambda_{\pi}(p)=a_{\pi}(p)$ for all primes $p$ 
(which follows from \eqref{lambdapin} and \eqref{apin} immediately), 
Lemma \ref{PNTWY} shows that
the condition \eqref{TWcondition1} is satisfied by this non-negative multiplicative function 
with the parameters indicated
unconditionally for $2\le m\le 4$ and under the Hypothesis H for $m\ge 5$.

Next we verify the condition \eqref{Condition2} : the inequality
\begin{equation}\label{(1.8)}
\sum_{\nu\ge 2} \sum_{p}
\frac{|\lambda_{\pi}(p^\nu)|^2}{d_{\kappa}(p^{\nu})p^{\nu}} \log p^\nu
\ll_{\pi, \kappa} 1
\end{equation} 
holds unconditionally for $2\le m\le 4$ and under Hypothesis H for $m\ge 5$.

Put $\eta_m := \tfrac{1}{4}(1-2\theta_m)$,
where $\theta_m$ is given by \eqref{defthetam}.
In view of \eqref{lambdapin}, \eqref{LRS} and the estimates
$$
d_{\kappa}(p^{\nu})
= \frac{1}{\nu !} \prod_{0\le j<\nu} (\kappa+j)
\ge \frac{\kappa}{\nu},
\qquad
d_m(p^{\nu}) \log p^{\nu}
\ll_m p^{\nu\eta_m},
$$
we see that 
\begin{equation}\label{UBlambdapin}
\frac{|\lambda_{\pi}(p^{\nu})|^2}{d_{\kappa}(p^{\nu})} \log p^\nu
\ll_{\kappa, m} p^{\nu 2\theta_m} d_m(p^\nu)^2 (\log p^\nu)^2
\ll_m p^{\nu 2(\theta_m+\eta_m)}
\end{equation}
for all primes $p$ and integers $\nu\ge 1$.
From this we deduce that 
\begin{equation}\label{largenu}
\begin{aligned}
\sum_{\nu\ge [1/(2\eta_m)]+2} \sum_{p} 
\frac{|\lambda_{\pi}(p^\nu)|^2}{d_{\kappa}(p^{\nu})p^{\nu}} \log p^\nu
& \ll \sum_{p} \sum_{\nu\ge [1/(2\eta_m)]+2}
\frac{1}{p^{\nu(1-2\theta_m-2\eta_m)}}
\\
& \ll \sum_{p} \sum_{\nu\ge [1/(2\eta_m)]+2}
\frac{1}{p^{\nu 2\eta_m}}
\\
& \ll \sum_{p} \frac{1}{p^{1+2\eta_m}}
\\\noalign{\vskip 1mm}
& \ll 1.
\end{aligned}
\end{equation}

It remains to estimate the contribution of $2\le \nu\le [1/(2\eta_m)]+2$ to the double sums of \eqref{(1.8)}.
To this end,  we apply the formula \cite[(5.12)]{Qu2010} 
$$
\nu \lambda_{\pi}(p^\nu)
= \sum_{1\le j\le \nu} a_{\pi}(p^j) \lambda_{\pi}(p^{\nu-j})
$$
and the Cauchy inequality to write
$$
|\lambda_{\pi}(p^\nu)|^2
\le \sum_{1\le j\le \nu} |a_{\pi}(p^j) \lambda_{\pi}(p^{\nu-j})|^2.
$$
From this and \eqref{UBlambdapin}, we derive, for $2\le \nu\le [1/(2\eta_m)]+2$,
\begin{equation}\label{lambdapnuapnu}
\begin{aligned}
\sum_{p} 
\frac{|\lambda_{\pi}(p^\nu)|^2}{d_{\kappa}(p^{\nu})p^{\nu}} \log p^\nu
& \ll_{m, \kappa} \sum_{1\le j\le \nu}\sum_{p} 
\frac{|a_{\pi}(p^j)\lambda_{\pi}(p^{\nu-j})|^2}{p^{\nu}} \log p
\\
& \ll_{m, \kappa} \sum_{1\le j\le \nu}\sum_{p} 
\frac{|a_{\pi}(p^j)|^2}{p^{j2(\theta_m+\eta_m)+\nu(1-2\theta_m-2\eta_m)}} \log p
\\
& \ll_{m, \kappa} \sum_{p} \frac{|a_{\pi}(p)|^2}{p^{1+2\eta_m}} \log p
+ \sum_{2\le j\le \nu}\sum_{p} \frac{|a_{\pi}(p^j)|^2}{p^{j}}  \log p.
\end{aligned}
\end{equation}
According to \cite[(2.2)]{WuYe2007}, for any $\varepsilon>0$ we have 
$$
\sum_p \frac{|a_{\pi}(p)|^2}{p^{1+\varepsilon}} \log p\ll_{\pi, \varepsilon} 1,
$$
which implies that
\begin{equation}\label{firstsum}
\sum_{p} \frac{|a_{\pi}(p)|^2}{p^{1+2\eta_m}} \log p\ll_{\pi} 1.
\end{equation}

When $m\ge 5$, it is clear that Hypothesis H implies
\begin{equation}\label{secondsumLargem}
\sum_{2\le j\le \nu}\sum_{p} \frac{|a_{\pi}(p^j)|^2}{p^{j}}  \log p
\ll_m \sum_{p} \frac{|a_{\pi}(p^j)|^2}{p^{j}}  \log p
\ll_\pi 1
\end{equation}
for $2\le \nu\le [1/(2\eta_m)]+2$. 

When $m\in \{2, 3, 4\}$,
Lemma 3.1(i) of \cite{WuYe2007} implies that there is a constant $\delta_m>0$ such that
\begin{align*}
\sum_{p\le x^{1/j}} |a_\pi(p^j)|^2
& \le \sum_{p^j\le x, \, j\ge 2} (\log p) |a_\pi(p^j)|^2
\\\noalign{\vskip 1mm}
& \ll x^{1-\delta_m}
\ll (x^{1/j})^{j(1-\delta_m)}
\end{align*}
for each fixed $j\ge 2$ and all $x\ge 2$.
Hence for $2\le \nu\le [1/(2\eta_m)]+2$, a simple integration by parts yields
\begin{equation}\label{secondsumSmallm}
\begin{aligned}
\sum_{2\le j\le \nu}\sum_{p} \frac{|a_{\pi}(p^j)|^2}{p^{j}}  \log p
& \ll_m \sum_{p} \frac{|a_{\pi}(p^j)|^2}{p^{j(1-\delta_m/2)}}
\\
& = \int_{2-}^{\infty} t^{-j(1-\delta_m/2)} \d \bigg(\sum_{p\le t} |a_{\pi}(p^j)|^2\bigg)
\\
& \ll t^{-j\delta_m/2}\Big|_{2}^{\infty} 
+ \int_{2}^{\infty} t^{-1-j\delta_m/2} \d t
\\\noalign{\vskip 2mm}
& \ll 1.
\end{aligned}
\end{equation}

Inserting \eqref{firstsum}, \eqref{secondsumLargem} and \eqref{secondsumSmallm} into \eqref{lambdapnuapnu}, 
we get
\begin{equation}\label{boundednu}
\sum_{2\le \nu\le [1/(2\eta_m)]+2} \sum_{p} 
\frac{|\lambda_{\pi}(p^\nu)|^2}{d_{\kappa}(p^{\nu})p^{\nu}} \log p^\nu
\ll_{m, \kappa} 1
\end{equation}
unconditionally for $2\le m\le 4$ and under Hypothesis H for $m\ge 5$.

Now \eqref{(1.8)} follows from \eqref{largenu} and \eqref{boundednu}.
\end{proof}

\begin{lemma}\label{FirstMomentLemma*}
Let $\pi$ be a self-contragredient irreducible unitary cuspidal representation for $GL_m(\A_\Q)$.
Then there is a positive constant $x_0(\pi)$ such that
\begin{equation}\label{LBS1*}
\sum_{n\le x} |\lambda_{\pi}(n)| \Big(\log\frac{x}{n}\Big)^{[m/2]+1}
\gg_{\pi} x^{1-\theta_m} (\log x)^{1/m-1}
\quad
(x\ge x_0(\pi))
\end{equation}
unconditionally for $2\le m\le 4$ and under Hypothesis H for $m\ge 5$, respectively, 
where the implied constant depends only on $\pi$.
\end{lemma}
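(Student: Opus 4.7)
The strategy is to combine the weighted second moment from Theorem~\ref{thm4} with a single application of the Cauchy--Schwarz inequality, using the Luo--Rudnick--Sarnak pointwise bound \eqref{LRS} to handle the cubic sum that arises. Set $K := [m/2]+1$ and $w(n) := (\log(x/n))^{K}$. Factoring the summand as
$$\frac{\lambda_\pi(n)^2}{d_m(n)}\, w(n)
= \Big(|\lambda_\pi(n)|^{1/2} w(n)^{1/2}\Big)\cdot \Big(\frac{|\lambda_\pi(n)|^{3/2}}{d_m(n)}\, w(n)^{1/2}\Big),$$
the Cauchy--Schwarz inequality gives
$$\Big(\sum_{n\le x}\frac{\lambda_\pi(n)^2}{d_m(n)}\, w(n)\Big)^{\!2}
\le \Big(\sum_{n\le x}|\lambda_\pi(n)|\, w(n)\Big)\Big(\sum_{n\le x}\frac{|\lambda_\pi(n)|^3}{d_m(n)^2}\, w(n)\Big).$$
The plan is to bound the left-hand side from below and the last factor on the right from above, and then to solve for the middle factor.

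For the left-hand side, I would invoke Theorem~\ref{thm4} with $\kappa=m$ to obtain $\sum_{n\le y}\lambda_\pi(n)^2/d_m(n) \sim C_m(\pi)\, y(\log y)^{1/m-1}$, unconditionally for $2\le m\le 4$ and under Hypothesis~H for $m\ge 5$. Writing $S(t):=\sum_{n\le t}\lambda_\pi(n)^2/d_m(n)$ and noting that $w(x)=0$, partial summation then yields
$$\sum_{n\le x}\frac{\lambda_\pi(n)^2}{d_m(n)}\, w(n) = K\int_1^x \frac{S(t)}{t}\bigl(\log(x/t)\bigr)^{K-1}\d t \gg_\pi x(\log x)^{1/m-1}\!;$$
the substitution $u=\log(x/t)$ reduces the right-hand integral to an incomplete Gamma integral whose mass is concentrated at $u=O(1)$, so only the behaviour of $S(t)$ for $t$ close to $x$ matters, and the main term survives.

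For the cubic sum, the LRS bound $|\alpha_\pi(p,j)|\le p^{\theta_m}$ plugged into \eqref{lambdapin} yields the trivial pointwise bound $|\lambda_\pi(n)|\le n^{\theta_m} d_m(n)$, whence, for $n\le x$,
$$\frac{|\lambda_\pi(n)|^3}{d_m(n)^2}\le x^{\theta_m}\,\frac{\lambda_\pi(n)^2}{d_m(n)}\cdot$$
Summing with the weight $w$ and using the same asymptotic from Theorem~\ref{thm4}, one gets $\sum_{n\le x}|\lambda_\pi(n)|^3/d_m(n)^2\cdot w(n)\ll x^{1+\theta_m}(\log x)^{1/m-1}$. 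Dividing the squared lower bound by this upper bound produces exactly
$$\sum_{n\le x}|\lambda_\pi(n)|\, w(n) \gg_\pi \frac{\bigl(x(\log x)^{1/m-1}\bigr)^2}{x^{1+\theta_m}(\log x)^{1/m-1}} = x^{1-\theta_m}(\log x)^{1/m-1},$$
which is precisely the claimed estimate.

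The main obstacle, such as it is, lies in the bookkeeping of the logarithmic exponents: one needs the power of $\log$ in Theorem~\ref{thm4} to be exactly $1/m-1$ so that two copies in the squared numerator combine with one copy in the denominator to yield the advertised $(\log x)^{1/m-1}$. The choice $\kappa=m$ is forced by the LRS bound, which is just the $d_m(n)$-majorant on $|\lambda_\pi(n)|$; this explains why the divisor function $d_m$ (and no other $d_\kappa$) is the right weight to pair with Cauchy--Schwarz. Everything else---the partial summation with the smooth weight $(\log(x/n))^{[m/2]+1}$, and the LRS pointwise estimate---is routine.
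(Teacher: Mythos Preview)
Your argument is correct and rests on the same two ingredients as the paper's proof: the pointwise bound $|\lambda_\pi(n)|\le n^{\theta_m}d_m(n)$ from \eqref{LRS}--\eqref{lambdapin}, and Theorem~\ref{thm4} with $\kappa=m$. The only difference is packaging. The paper bypasses Cauchy--Schwarz entirely by observing directly that $|\lambda_\pi(n)|\ge \lambda_\pi(n)^2/(n^{\theta_m}d_m(n))$, and then restricting to $n\le x/2$ (where $(\log(x/n))^{K}\ge(\log 2)^{K}$ and $n^{-\theta_m}\ge x^{-\theta_m}$) to get
\[
\sum_{n\le x}|\lambda_\pi(n)|\,w(n)\;\ge\;(\log 2)^{K}\,x^{-\theta_m}\sum_{n\le x/2}\frac{\lambda_\pi(n)^2}{d_m(n)},
\]
after which Theorem~\ref{thm4} finishes the job in one line. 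Your Cauchy--Schwarz route actually collapses to the same inequality: once you insert $|\lambda_\pi(n)|^3/d_m(n)^2\le x^{\theta_m}\lambda_\pi(n)^2/d_m(n)$ into the cubic sum, the squared left-hand side cancels one copy of $\sum \lambda_\pi(n)^2 w(n)/d_m(n)$ from the right, leaving precisely $\sum|\lambda_\pi(n)|w(n)\ge x^{-\theta_m}\sum \lambda_\pi(n)^2 w(n)/d_m(n)$. So the Cauchy--Schwarz step and the partial-summation analysis of the weighted sum are redundant; the paper's version is shorter but yours is not wrong.
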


\begin{proof}
By the first inequality of \eqref{LRS} and the multiplicativity of $\lambda_{\pi}(n)$, 
we have
$$
|\lambda_{\pi}(n)|\le n^{\theta_m}d_m(n),
$$ 
which implies that
$$
|\lambda_{\pi}(n)|
\ge \frac{\lambda_{\pi}(n)^2}{n^{\theta_m}d_m(n)} 
$$
for all $n\ge 1$.
From this, we deduce that 
\begin{align*}
\sum_{n\le x} |\lambda_{\pi}(n)| \Big(\log\frac{x}{n}\Big)^{[m/2]+1}
& \ge \sum_{n\le x} \frac{\lambda_{\pi}(n)^2}{n^{\theta_m}d_m(n)} 
\Big(\log\frac{x}{n}\Big)^{[m/2]+1}
\\
& \ge (\log 2)^{[m/2]+1} x^{-\theta_m} \sum_{n\le x/2} \frac{\lambda_{\pi}(n)^2}{d_m(n)}\cdot
\end{align*}
The desired result is an immediate consequences of 
\eqref{2ndWeightedMoment} with $\kappa=m$.
\end{proof}

\begin{lemma}\label{FirstMomentLemma}
Let $\pi$ be a self-contragredient irreducible unitary cuspidal representation for $GL_m(\A_\Q)$ with $m\ge 2$.
Then for any $\varepsilon>0$ we have
\begin{equation}\label{UBS1}
\sum_{n\le x} \lambda_{\pi}(n) \Big(\log\frac{x}{n}\Big)^{[m/2]+1}
\ll_{\pi, \varepsilon} x^{\varepsilon}
\end{equation}
for all $x\ge 2$,
where the implied constant depends on $\pi$ and $\varepsilon$.
\end{lemma}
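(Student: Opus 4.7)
The plan is a standard contour integration argument exploiting the fact that, for $m\ge 2$, the function $L(s,\pi)$ is entire, so the smoothed partial sum of $\lambda_\pi(n)$ has no main term and can be made arbitrarily small in $x^\varepsilon$.

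Set $k:=[m/2]+1$. The starting point is the Mellin identity
$$
\frac{1}{2\pi\i}\int_{(c)}\frac{y^s}{s^{k+1}}\d s
=\begin{cases}(\log y)^k/k! & \text{if }y\ge 1,\\ 0 & \text{if }0<y<1,\end{cases}
$$
valid for any $c>0$. Taking $y=x/n$, $c>1$, and summing against $\lambda_\pi(n)$ (the interchange is justified by absolute convergence of $L(s,\pi)$ in $\re s>1$), one obtains
$$
\sum_{n\le x}\lambda_\pi(n)\Big(\log\frac{x}{n}\Big)^{k}
=\frac{k!}{2\pi\i}\int_{(c)} L(s,\pi)\,\frac{x^s}{s^{k+1}}\d s.
$$
The key structural fact is that, since $\pi$ is a cuspidal representation of $GL_m(\A_\Q)$ with $m\ge 2$, the $L$-function $L(s,\pi)$ is entire (Godement--Jacquet), so $L(s,\pi)/s^{k+1}$ is holomorphic in $\C\sset\{0\}$.

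The next step is to shift the contour to the line $\re s=\varepsilon$, picking up no residue because $\varepsilon>0$ keeps us clear of the pole of $1/s^{k+1}$ at $s=0$. Standard convexity (Phragmén--Lindelöf applied between $\re s=1+\varepsilon$, where $L(s,\pi)$ is bounded, and $\re s=-\varepsilon$, where the functional equation combined with Stirling supplies growth $\ll |t|^{m(1+\varepsilon)/2}$) yields the bound
$$
|L(\varepsilon+\i t,\pi)|\ll_{\pi,\varepsilon}(1+|t|)^{m(1-\varepsilon)/2+\varepsilon}
$$
uniformly in $t$. The horizontal segments at $\pm\i T$ vanish as $T\to\infty$ because this polynomial growth is beaten by the factor $1/s^{k+1}$.

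On the shifted line, one estimates the integral trivially:
$$
\left|\frac{k!}{2\pi\i}\int_{(\varepsilon)} L(s,\pi)\,\frac{x^s}{s^{k+1}}\d s\right|
\ll x^\varepsilon \int_{-\infty}^{\infty}\frac{(1+|t|)^{m(1-\varepsilon)/2+\varepsilon}}{(1+|t|)^{k+1}}\d t.
$$
The choice $k=[m/2]+1$ is exactly what makes the integrand decay faster than $|t|^{-1}$: indeed $k+1-m/2\ge 1$ in all cases ($k+1=m/2+2$ for $m$ even, $k+1=(m+3)/2$ for $m$ odd), so for $\varepsilon$ small enough the exponent $m(1-\varepsilon)/2+\varepsilon-(k+1)$ stays strictly below $-1$ and the $t$-integral converges to a constant depending only on $\pi$ and $\varepsilon$. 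This yields the claimed bound $\ll_{\pi,\varepsilon}x^\varepsilon$.

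The only mildly delicate point is the convexity bound on the line $\re s=\varepsilon$ together with decay of the horizontal pieces; everything else is purely formal once one knows $L(s,\pi)$ is entire. The value $k=[m/2]+1$ is dictated precisely by this convergence requirement, and this is why the weight $(\log x/n)^{[m/2]+1}$ was built into the statement of Lemma~\ref{FirstMomentLemma*} in the first place.
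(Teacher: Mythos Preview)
Your proof is correct and follows essentially the same route as the paper's: express the smoothed sum via the Mellin/Perron identity, shift the contour to $\re s=\varepsilon$ (no residue since $L(s,\pi)$ is entire for $m\ge 2$), and invoke the convexity bound $L(\varepsilon+\i t,\pi)\ll_{\pi,\varepsilon}(1+|t|)^{m(1-\varepsilon)/2+\varepsilon}$ to see that the $t$-integral converges. The paper simply cites Harcos for this convexity bound and is terser about the contour shift, whereas you spell out the Phragm\'en--Lindel\"of argument and the role of $k=[m/2]+1$ in forcing convergence; your inclusion of the $k!$ constant is also more accurate, though of course irrelevant for the $\ll$ conclusion.
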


\begin{proof}
We apply Perron's formula to write
$$
\sum_{n\le x} \lambda_{\pi}(n) \Big(\log\frac{x}{n}\Big)^{[m/2]+1} 
= \frac{1}{2\pi \text{i}}\int_{2-\text{i}\infty}^{2+\text{i}\infty}L(s,\pi)\frac{x^s}{s^{[m/2]+2}}{\rm d} s.
$$
Moving the contour to the vertical line $\re s=\varepsilon$ with $\varepsilon$ being an arbitrarily small
positive constant, 
and applying Harcos' convexity bound for $L(s,\pi)$ (see \cite{Harcos2004}) :
$$
L(\sigma+\text{i}t, \pi)\ll_{\pi, \varepsilon} (|t|+1)^{\max\{m(1-\sigma)/2, 0\}+\varepsilon}
\quad
(\sigma, \, t\in \R),
$$
we obtain
\begin{align*}
\sum_{n\le x} \lambda_{\pi}(n) \Big(\log\frac{x}{n}\Big)^{[m/2]+1}
& = \frac{1}{2\pi \text{i}}\int_{\varepsilon-\text{i}\infty}^{\varepsilon+\text{i}\infty}
L(s,\pi)\frac{x^s}{s^{[m/2]+2}} {\rm d} s 
\\
& \ll_{\pi, \varepsilon} x^\varepsilon \int_{-\infty}^{\infty}
\frac{{\rm d} t}{(|t|+1)^{1+\varepsilon}}
\\\noalign{\vskip 1mm}
& \ll_{\pi,\varepsilon} x^\varepsilon.
\end{align*}
This completes the proof.
\end{proof}

Now we are ready to complete the proof of Theorem \ref{thm1}. 

\medskip 
\noindent 
{\it Proof of Theorem~\ref{thm1}.}  
Define
$$
\lambda_{\pi}^{\pm}(n) := \frac{|\lambda_{\pi}(n)|\pm \lambda_{\pi}(n)}{2}\cdot
$$
It is easy to see that
$$
\lambda_{\pi}^{+}(n) = \begin{cases}
\lambda_{\pi}(n) & \text{if $\,\lambda_{\pi}^{+}(n)>0$,}
\\\noalign{\vskip 1mm}
0                          & \text{if $\,\lambda_{\pi}^{+}(n)<0$,}
\end{cases}
$$
and
$$
\lambda_{\pi}^{-}(n) = \begin{cases}
-\lambda_{\pi}(n) & \text{if $\,\lambda_{\pi}^{+}(n)<0$,}
\\\noalign{\vskip 1mm}
0                          & \text{if $\,\lambda_{\pi}^{+}(n)\ge 0$.}
\end{cases}
$$
It follows from \eqref{UBS1} and \eqref{LBS1*} that 
\begin{equation}\label{4.13}
\sum_{n\le x} \lambda_{\pi}^{\pm}(n) \Big(\log\frac{x}{n}\Big)^{[m/2]+1}
\gg x^{1-\theta_m} (\log x)^{1/m-1}
\end{equation}
for $x\ge x_0(\pi)$.

On the other hand, for any fixed $A>0$, by using \eqref{2ndMoment} of Theorem \ref{thm4} and by a simple partial integration, we find that
\begin{equation}\label{4.14}
\begin{aligned}
\sum_{n\le x} |\lambda_{\pi}(n)|^2 \Big(\log\frac{x}{n}\Big)^{A}
& = \int_{1-}^x \Big(\log\frac{x}{t}\Big)^{A} \d\Big(\sum_{n\le t} |\lambda_{\pi}(n)|^2\Big)
\\
& = A \int_1^x \sum_{n\le t} |\lambda_{\pi}(n)|^2 \Big(\log\frac{x}{t}\Big)^{A-1} \frac{\d t}{t}
\\
& \ll_{\pi} \int_1^x \Big(\log\frac{x}{t}\Big)^{A-1} \d t
\quad
(u = \log(x/t))
\\
& \ll_{\pi} x \int_0^{\log x} u^{A-1} \text{e}^{-u} \d u
\\\noalign{\vskip 2mm}
& \ll_{\pi} x.
\end{aligned}
\end{equation}
From \eqref{4.13} and \eqref{4.14},
we deduce, via the Cauchy-Schwarz inequality, that 
\begin{align*}
x^{1-\theta_m} (\log x)^{1/m-1}
& \ll_{\pi} \sum_{n\le x} \lambda_{\pi}^{\pm}(n) \Big(\log\frac{x}{n}\Big)^{[m/2]+1}
\\
& \ll_{\pi} \bigg\{\sum_{n\le x} |\lambda_{\pi}(n)|^2 \Big(\log\frac{x}{n}\Big)^{2[m/2]+2}
\sum_{\substack{n\le x\\ \lambda_{\pi}(n)\gtrless\,0}} 1\bigg\}^{1/2}
\\
& \ll_{\pi} 
\big\{x {\mathscr N}_{\pi}^{\pm}(x)\big\}^{1/2}.
\end{align*}
This proves Theorem~\ref{thm1}.  
\hfill $\square$

\vskip 2mm

\noindent{\bf Note added after publication}. 
The authors would like to thank Professor Chaohua Jia for pointing out 
the significance of his manuscript \cite{Jiachaohua2014}  
after the publication of present paper 
in the Journal of Number Theory.

\vskip 10mm


\begin{thebibliography}{CC}

\bibitem{BGHT2011}
T. Barnet-Lamb, D. Geraghty, M. Harris \& R. Taylor,
\textit{A family of Calabi-Yau varieties and potential automorphy. II},
Pub. Res. Inst. Math. Sci., {\bf 47} (2011), no. 1, 29--98.

\bibitem{HR1974}
H. Halberstam \& H.-E. Richert,
\textit{Sieve methods}, 
London Mathematical Society Monographs, No. {\bf 4}. 
Academic Press [A subsidiary of Harcourt Brace Jovanovich, Publishers], 
London-New York, 1974. xiv+364 pp.

\bibitem{HallTenenbaum1988}
R. R. Hall \& G. Tenenbaum, 
\textit{Divisors},  
Cambridge Tracts in Mathematics, {\bf 90}. 
Cambridge University Press, Cambridge, 1988. xvi+167 pp. ISBN: 0-521-34056-X.

\bibitem{Harcos2004}
G. Harcos, 
\textit{Uniform approximate functional equation for principal $L$-functions}, 
Intern. Math. Research Notes,
{\bf 13} (2002), 923--932; 
ibid. {\bf 18} (2004), 659--660.

\bibitem{Hildebrand1986}
A. Hildebrand,
\textit{On the number of positive integers $\le x$ and free of prime factors $>y$}, 
J. Number Theory {\bf 22} (1986), no. 3, 289--307.

\bibitem{JacquetShalika1981}
H. Jacquet \& J. A. Shalika, 
\textit{On Euler products and the classification of automorphic representations I},
Amer. J. Math. {\bf 103} (1981), 499--558.

\bibitem{Jiachaohua2014}
Chaohua Jia, 
\textit{A note on the number of coefficients of automorphic $L$-functions for $GL_m$ with same signs},
arXiv:1405.0643v1.

\bibitem{Kim2003}
H. H. Kim,
\textit{Functoriality for the exterior square of $GL_4$ 
and symmetric fourth of $GL_2$},
With appendix 1 by D. Ramakrishnan and 
appendix 2 by H. Kim \& P. Sarnak,
J. Amer. Math. Soc. {\bf 16} (2003), 139--183.

\bibitem{Kim2006}
H. H. Kim,
\textit{A note on Fourier coefficients of cusp forms on $GL_n$},
Forum Math. {\bf 18} (2006), 115--119.

\bibitem{KimSarnak2003}
H. H. Kim \& P. Sarnak, 
\textit{Refined estimates towards the Ramanujan and Selberg conjectures},
J. Amer. Math. Soc. {\bf 16} (2003), no. 1, 175--183.
Appendix 2 to \cite{Kim2003}.

\bibitem{Landau1915}
E. Landau, 
\textit{\"Uber die Anazahl der Gitterpunkte in gewissen Bereichen (Part II)}, 
G\"ott. Nachr. (1915), 209--243.

\bibitem{LauWu2009}
Y.-K. Lau \& J. Wu,
\textit{The number of Hecke eigenvalues of same signs}, 
Math. Z. {\bf 263} (2009), 957--970.

\bibitem{LauLiuWu2012}
Y.-K. Lau, J.-Y. Liu \& J. Wu,
\textit{Sign changes of the coefficients of automorphic $L$-functions},
Number Theory: Arithmetic In Shangri-La (Hackensack, NJ) 
(Shigeru Kanemitsu, Hongze Li, and Jianya Liu, eds.),
World Scientific Publishing Co. Pte. Ltd., 2013, pp. 141--181.

\bibitem{LiuWangYe2005}
J.-Y. Liu, Y.-H. Wang \& Y.-B. Ye,
\textit{A proof of Selberg's orthogonality for automorphic $L$-functions},
Manuscripta Math. {\bf 118} (2005), 135--149.

\bibitem{LuoRudnickSarnak1999}
W.-Z. Luo, Z. Rudnick \& P. Sarnak,
\textit{On the generalized Ramanujan conjecture for $GL(n)$},
Proceedings of Symposia In Pure Mathematics,
vol. {\bf 66}, part 2, 1999, 301--310.

\bibitem{Qu2010}
Y. Qu,
\textit{Linnik-type problems for automorphic $L$-functions},
J. Number Theory {\bf 130} (2010),  no. 3, 786--802.

\bibitem{RudnickSarnak1996}
Z. Rudnick \& P. Sarnak,
\textit{Zeros of principal $L$-functions and random matrix theory},
Duke Math. J. {\bf 81} (1996), 269--322.

\bibitem{Serre1981}
J.-P. Serre, 
\textit{Quelques applications du th\'eor\`eme de densit\'e de Chebotarev},
Inst. Hautes \'Etudes Sci. Publ. Math. {\bf 54} (1981), 323--401.

\bibitem{Song2001}
J. M. Song, 
\textit{Sums of nonnegative multiplicative functions over integers without large prime factors. I},
Acta Arith. {\bf 97} (2001), no. 4, 329--351.

\bibitem{TenenbaumWu2003}
G. Tenenbaum \& J. Wu,
\textit{Moyennes de certaines fonctions multiplicatives sur les entiers friables},
J. Reine Angew. Math. {\bf 564} (2003), 119--166. 

\bibitem{WuYe2007}
J. Wu \& Y.-B. Ye,
\textit{Hypothesis H and the prime number theorem for automorphic representations}, 
Functiones et Approximatio {\bf 37} (2007), no. 2, 461--471. 

\end{thebibliography}
\end{document}